\DeclareMathAlphabet{\mathpzc}{OT1}{pzc}{m}{it}
\newcommand{\ncom}{\newcommand}
\ncom{\rar}{\rightarrow}
\ncom{\imply}{\Rightarrow}
\ncom{\lrar}{\longrightarrow}
\ncom{\into}{\hookrightarrow}
\ncom{\onto}{\twoheadrightarrow}
\ncom{\ov}{\overline}
\ncom{\m}{\mbox}
\ncom{\sta}{\stackrel}
\ncom{\invlim}{\varprojlim}
\ncom{\xhat}{\widehat}
\ncom{\vspc}{\vspace{3mm}}
\ncom{\End}{{\cE}nd}
\ncom{\tensor}{\otimes}
\ncom{\al}{\alpha}
\ncom{\cHom}{{\mathcal Hom}}
\ncom{\A}{{\mathbb A}}
\ncom{\comx}{{\mathbb C}}
\ncom{\E}{{\mathbb E}}
\ncom{\F}{{\mathbb F}}
\ncom{\G}{{\mathbb G}}
\ncom{\K}{{\mathbb K}}
\ncom{\Le}{{\mathbb L}}
\ncom{\N}{{\mathbb N}}
\ncom{\p}{{\mathbb P}}
\ncom{\Q}{{\mathbb Q}}
\ncom{\R}{{\mathbb R}}
\ncom{\Z}{{\mathbb Z}}
\ncom{\f}{\dfrac}
\ncom{\wtil}{\widetilde}
\ncom{\ci}{{\mathpzc i}}
\ncom{\cA}{{\mathcal A}}
\ncom{\cC}{{\mathcal C}}
\ncom{\cE}{{\mathcal E}}
\ncom{\cF}{{\mathcal F}}
\ncom{\cG}{{\mathcal G}}
\ncom{\cH}{{\mathcal H}}
\ncom{\cI}{{\mathcal I}}
\ncom{\cJ}{{\mathcal J}}
\ncom{\cL}{{\mathcal L}}
\ncom{\cM}{{\mathcal M}}
\ncom{\cN}{{\mathcal N}}
\ncom{\cO}{{\mathcal O}}
\ncom{\cP}{{\mathcal P}}
\ncom{\cQ}{{\mathcal Q}}
\ncom{\cR}{{\mathcal R}}
\ncom{\cS}{{\mathcal S}}
\ncom{\cT}{{\mathcal T}}
\ncom{\cU}{{\mathcal U}}
\ncom{\cV}{{\mathcal V}}
\ncom{\cW}{{\mathcal W}}
\ncom{\cX}{{\mathcal X}}
\ncom{\cY}{{\mathcal Y}}
\ncom{\cZ}{{\mathcal Z}}
\ncom{\cSU}{{\mathcal S \mathcal U}}
\ncom{\eop}{{\hfill $\Box$}}
\ncom{\isom}{\cong}
\ncom{\todo}{{\textbf{TODO}}}
\newtheorem{theorem}{Theorem}[section]
\newtheorem{lemma}[theorem]{Lemma}
\newtheorem{proposition}[theorem]{Proposition}
\newtheorem{remark}[theorem]{Remark}
\newtheorem{question}[theorem]{Question}
\newtheorem{answer}[theorem]{Answer}
\begin{document}
\baselineskip=16pt

\title{Rank 3 arithmetically Cohen-Macaulay bundles over hypersurfaces}
\author{Amit Tripathi}
\address{Department of Mathematics, 
Indian Statistical Institute, Bangalore - 560059, India}
\email{amittr@gmail.com}
\thanks{This work was supported by a postdoctoral fellowship from NBHM, Department of Atomic Energy.}

\subjclass{14J60}
\keywords{Vector bundles, hypersurfaces, arithmetically Cohen-Macaulay}

\begin{abstract} Let $X$ be a smooth projective hypersurface of dimension $\geq 5$ and let $E$ be an arithmetically Cohen-Macaulay bundle on $X$ of any rank. We prove that $E$ splits as a direct sum of line bundles if and only if $H^i_*(X, \wedge^2 E) = 0$ for $i = 1,2,3,4$. As a corollary this result proves a conjecture of Buchweitz, Greuel and Schreyer for the case of rank 3 arithmetically Cohen-Macaulay bundles.
\end{abstract}
\maketitle
\section{Introduction}

We work over an algebraically closed field of characteristic 0. Let $\{X, \cO_X(1)\} \subset \p^{n+1}$ be a smooth projective hypersurface of degree $d$. We say a vector bundle on $X$ is \textit{split} if it can be written as a direct sum of line bundles. We say that it is \textit{indecomposable} if it can not be written as a direct sum of vector bundles of strictly smaller rank.

An \textit{arithmetically Cohen-Macaulay (ACM)}  vector bundle $E$ on $X$ is a locally free sheaf satisfying
$$
H^i_*(X, E) := \oplus_{k \in \Z}H^i(X, E(k)) = 0 \,\,\,\,\text{for} \,\,\, i = 1, \ldots n-1
$$

Some of the reasons why the study of ACM bundles is important are:
\begin{itemize}
\item On projective space, ACM bundles are precisely the bundles which are direct sum of line bundles \cite{Hor}.
\item By semicontinuity, ACM bundles form an open set in any flat family of vector bundles over $X$.
\item The $n$'th syzygy of a resolution of any vector bundle on $X$ by split bundles, is an arithmetically Cohen-Macaulay bundle \cite{Eisen81}.
\item These sheaves correspond to maximal Cohen-Macaulay modules over the associated coordinate ring \cite{Beau}.
\end{itemize}

When $d > 1$ there always exist indecomposable arithmetically Cohen-Macaulay bundles see e.g. \cite{M-R-R} for low dimensional construction and \cite{BGS} for a construction for higher dimensional hypersurfaces. The following conjecture forms the basis of research done in the direction of investigating the splitting behaviour of ACM bundles over hypersurfaces:

\begin{proof}[Conjecture (Buchweitz, Greuel and Schreyer \cite{BGS}):] Let $X \subset \p^n$ be a hypersurface. Let $E$ be an ACM bundle on $X$. If rank $E < 2^e$, where $e = \left [ \displaystyle{\frac{n-2}{2}} \right ]$, then $E$ splits. (Here $[q]$ denotes the largest integer $\leq q$.) 
\end{proof} 

This conjecture can not be strengthened further as the authors constructed an indecomposable ACM bundle of rank $2^e$ in \textit{op. cit.}   

For rank 2 ACM bundles, the conjecture follows from \cite{Kle}. Generic behaviour for rank 2 case is also well understood when $n \geq 4$ and we refer the reader to \cite{C-M1}, \cite{C-M2}, \cite{C-M3}, \cite{M-R-R}, \cite{M-R-R2}, \cite{R} and to the reference cited in these articles. For lower dimensional case, we refer the reader to \cite{Madonna1998}, \cite{Madonna2000}, \cite{Faenzi2008}, \cite{Chiantini-Faenzi2009} and \cite{C-H}. The result for rank 2 bundles was generalized to complete intersections in \cite{Biswas-Ravindra2010}.
 
For rank 3 ACM bundles the conjecture predicts splitting for $n \geq 5$ dimensional hypersurfaces. We proved a weaker version in \cite{Tripathi2015}. In this article, we prove the conjecture for rank 3 arithmetically Cohen-Macaulay bundles. 
\begin{theorem} \label{theorem_rank_3_splitting}
Let $X$ be a smooth hypersurface of dimension $\geq 5$. Let $E$ be a rank 3 arithmetically Cohen-Macaulay bundle over $X$. Then $E$ is a split bundle.
\end{theorem}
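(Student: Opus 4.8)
The plan is to derive Theorem~\ref{theorem_rank_3_splitting} from the cohomological splitting criterion announced in the abstract: an ACM bundle $E$ of arbitrary rank on a smooth hypersurface $X$ of dimension $\geq 5$ splits if and only if $H^i_*(X,\wedge^2 E)=0$ for $i=1,2,3,4$. Granting that criterion, the rank $3$ case is immediate. Since $\dim X\geq 3$, the Grothendieck--Lefschetz theorem gives $\m{Pic}(X)=\Z\cdot\cO_X(1)$, so $\det E=\cO_X(c_1)$ for some integer $c_1$; for a rank $3$ bundle the perfect pairing $\wedge^2 E\times E\to\wedge^3 E$ then identifies $\wedge^2 E\isom E^\vee\tensor\det E=E^\vee(c_1)$. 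Now $E$ ACM forces $E^\vee$ ACM, because Serre duality on $X$ gives $H^i(X,E(k))\isom H^{n-i}(X,E^\vee(-k)\tensor\omega_X)^\vee$ and $i\mapsto n-i$ permutes $\{1,\dots,n-1\}$; and tensoring by the line bundle $\cO_X(c_1)$ preserves the ACM property. Hence $\wedge^2 E$ is ACM, i.e.\ $H^i_*(X,\wedge^2 E)=0$ for $i=1,\dots,n-1$, and since $n\geq 5$ this includes $i=1,2,3,4$. The criterion then yields that $E$ splits.

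It remains to indicate how I would prove the criterion itself, which is where the real work lies. The ``only if'' direction is routine: a split bundle has split $\wedge^2$, and every line bundle $\cO_X(k)$ on a hypersurface of dimension $\geq 3$ has vanishing intermediate cohomology, as one reads off from $0\to\cO_{\p^{n+1}}(k-d)\to\cO_{\p^{n+1}}(k)\to\cO_X(k)\to 0$. For the ``if'' direction I would induct on $\m{rank}\,E$. The base case $\m{rank}\,E\leq 2$ is known for $\dim X\geq 5$ (the rank $\leq 2$ instance of the conjecture; see the references in the introduction). For the inductive step, twist $E$ so that $H^0(X,E)\neq 0$ while $H^0(X,E(-1))=0$, pick $0\neq s\in H^0(X,E)$, and analyse its zero scheme $Z=Z(s)$, which has codimension $\geq 2$ (a divisorial zero locus would contradict the normalization). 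If $s$ is nowhere vanishing one obtains $0\to\cO_X\to E\to F\to 0$ with $F$ locally free of rank one less; a cohomology chase using that $E$ and $E^\vee$ are both ACM shows $F$ is ACM, one arranges that $F$ falls under the induction hypothesis, and the extension splits because $\m{Ext}^1(\cO_X(b),\cO_X)=0$. If $Z\neq\emptyset$ it has codimension $2$ or $3$, and the section sits in a Koszul-type complex whose remaining terms are ACM, forcing $Z$ to be arithmetically Cohen--Macaulay; here the hypothesis on $H^i_*(\wedge^2 E)$ is exactly what one needs to conclude that such a $Z$ (of dimension $\geq 2$ in a hypersurface of dimension $\geq 5$) is a complete intersection, which again yields a line-bundle summand and reduces the rank. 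Hyperplane sections of $X$ would be used throughout to move vanishing statements between $X$ and its linear sections, exploiting that restriction preserves the ACM property in the relevant range of degrees.

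The main obstacle is precisely this rank-reduction step. Because $\m{rank}\,E<\dim X$, no globally generated twist of $E$ admits a nowhere-vanishing section, so one cannot peel off a line bundle by a naive general-section argument; one is forced into the Serre construction and into controlling the geometry of $Z(s)$, and it is there that the narrow range $i=1,2,3,4$ for the vanishing of $H^i_*(\wedge^2 E)$ is both used and, as the indecomposable examples of Buchweitz--Greuel--Schreyer show, sharp. Making the propagation of the Cohen--Macaulay condition through this construction work uniformly in $\m{rank}\,E$ and in all dimensions $\geq 5$ is the technical heart of the proof.
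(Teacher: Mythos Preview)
Your reduction of Theorem~\ref{theorem_rank_3_splitting} to the main criterion is correct and is exactly what the paper does: the perfect pairing gives $\wedge^2 E\cong E^\vee(c_1)$, Serre duality makes $E^\vee$ and hence $\wedge^2 E$ ACM, and Theorem~\ref{theorem_main_result} applies.

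Your sketch of the criterion itself, however, follows a route the paper does not take, and the gap you flag is real. The paper never inducts on rank or analyses zero loci of sections. Instead it uses the matrix-factorization structure of ACM bundles on a hypersurface: from the minimal resolution $0\to\widetilde{F_1}\to\widetilde{F_0}\to E\to 0$ on $\p^{n+1}$ one extracts the syzygy bundle $E^\sigma$ on $X$, and splicing the exterior/symmetric power filtrations of \eqref{eqn_E_1_step}, \eqref{eqn_G_E} yields a six-term exact sequence that gives a periodicity $H^i_*(\wedge^2 E)\cong H^{i+4}_*(\wedge^2 E)$. Thus vanishing for $i=1,2,3,4$ already forces $\wedge^2 E$ to be fully ACM. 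A companion argument (Proposition~\ref{lemma_wedge_2_E_ACM_iff_wedge_2_G_ACM}) shows $\wedge^2 E^\sigma$ is ACM, and then the Koszul-type resolution of $S^2 E$ shows $S^2 E$ is ACM too, so $E\otimes E$ is ACM. The finishing blow is a graded corollary of the Huneke--Wiegand rigidity theorem: over a hypersurface ring, if $M\otimes M$ is maximal Cohen--Macaulay then $M$ is free. Your section-based approach, by contrast, would need to turn the hypothesis on $H^i_*(\wedge^2 E)$ into a complete-intersection statement for $Z(s)$, and you supply no such mechanism; since $\mathrm{rank}\,E<\dim X$ forces $Z(s)\neq\emptyset$ generically, this is not a detail but the whole argument. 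The paper's approach sidesteps the geometry of sections entirely, at the price of importing Huneke--Wiegand; what it buys is a proof that is uniform in the rank and requires no case analysis on $\mathrm{codim}\,Z(s)$.
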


This result follows as a corollary from the main result of this article - a splitting criterion for ACM bundles of any rank.
\begin{theorem} \label{theorem_main_result}
Let $X$ be a smooth hypersurface of dimension $\geq 5$. Let $E$ be an arithmetically Cohen-Macaulay vector bundle on $X$ of any rank. Then $E$ splits if and only if $H^i_*(X,\wedge^2 E) = 0$ for $i = 1,2,3,4$.
\end{theorem}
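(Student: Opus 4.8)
The forward implication is immediate. If $E\cong\bigoplus_j\cO_X(a_j)$ then $\wedge^2E\cong\bigoplus_{j<k}\cO_X(a_j+a_k)$, and since $X$ is a smooth hypersurface of dimension $n\ge5$ one has $H^i_*(X,\cO_X(a))=0$ for $1\le i\le n-1$ and every $a\in\Z$ (use the sequence $0\to\cO_{\p^{n+1}}(a-d)\to\cO_{\p^{n+1}}(a)\to\cO_X(a)\to0$); in particular $H^i_*(\wedge^2E)=0$ for $i=1,2,3,4$.

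For the converse I would induct on $r=\operatorname{rank}E$. The cases $r\le2$ are known: for $r=1$ there is nothing to prove, and for $r=2$ the hypothesis on $\wedge^2E=\det E$ is automatic and $E$ splits by the rank $2$ case of the conjecture (cf.\ the references above). For the inductive step it is enough to exhibit one line bundle $\cO_X(a)$ as a direct summand of $E$: writing $E\cong\cO_X(a)\oplus E'$, the bundle $E'$ is ACM of rank $r-1$, and from $\wedge^2E\cong\wedge^2E'\oplus\bigl(\cO_X(a)\otimes E'\bigr)$ together with the fact that $\cO_X(a)\otimes E'$ is ACM (so has vanishing intermediate cohomology, in particular in degrees $1,\dots,4$) one gets $H^i_*(\wedge^2E')=0$ for $i=1,2,3,4$, and the inductive hypothesis finishes the argument.

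To split off a line bundle I would use a section. After a suitable twist, choose $k$ and a sufficiently general section $s\in H^0(X,E(k))$ whose zero scheme $Z$ is smooth of the expected codimension $r$ (by Bertini, with $E(k)$ globally generated). The section yields the Koszul co-complex
$$
0\to\cO_X\xrightarrow{\,s\,}E(k)\to\wedge^2E(2k)\to\cdots\to\wedge^{r-1}E\bigl((r-1)k\bigr)\to\det E(rk)\to\det E(rk)\big|_Z\to0 ,
$$
which is exact because $Z$ is a local complete intersection of codimension $r$ in $X$. Breaking it into short exact sequences, beginning with $0\to\cO_X\to E(k)\to\cC_1\to0$ and ending with $0\to\cC_{r-1}\to\det E(rk)\to\det E(rk)\big|_Z\to0$, and chasing cohomology --- using that $E$, $\det E$, $\cO_X$ and $\wedge^{r-1}E\cong E^\vee\otimes\det E$ are all ACM, and that $H^i_*(\wedge^2E)=0$ for $i=1,2,3,4$ --- one controls the intermediate cohomology of $\cC_1$ in the full range $1\le i\le n-2$ and of the subsequent syzygy sheaves in a range of low degrees, and in the end obtains $H^1_*(\cO_Z)=0$, i.e.\ that $Z$ with its polarization is arithmetically Cohen--Macaulay. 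For $r=3$ the only exterior powers occurring are $\wedge^1$, $\wedge^2$ and $\wedge^3$, all of which are controlled at once, and for $r=4$ one has in addition $\wedge^3E\cong E^\vee\otimes\det E$; this is why, via the corollary, rank $3$ is the decisive case.

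The main obstacle is the final step: passing from ``$Z$ is arithmetically Cohen--Macaulay'' to an honest direct-sum decomposition. The plan here is to compare the Koszul co-complex above with the minimal graded free resolution of $\cO_Z$ over the homogeneous coordinate ring of $X$ --- equivalently, to push everything forward along $X\into\p^{n+1}$ and compare with the resolution of $\cO_Z$ on $\p^{n+1}$. The ACM hypothesis on $E$ together with $H^i_*(\wedge^2E)=0$ for $i=1,2,3,4$ is precisely what forces this co-complex, once the two-periodic tails coming from the hypersurface equation are stripped off, to be the Koszul complex of a regular sequence $g_1,\dots,g_r$; then $Z$ is a complete intersection in $X$, $E(k)\cong\bigoplus_i\cO_X(\deg g_i)$, and $E$ splits. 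Two things make this step delicate. First, the twist $k$ must be chosen small enough that the sub-line-bundle $\cO_X(-k)\into E$ admits a retraction --- a very positive twist never splits off --- so the section $s$ must be taken of minimal degree. Second, for ranks $\ge5$ the relation $\wedge^{r-1}E\cong E^\vee\otimes\det E$ and Serre duality no longer cover all the exterior powers appearing in the co-complex, so one must feed back the statement being proved; the induction on rank is used here in an essential way, not merely to pass from $E$ to $E'$. Finally, the bound $i\le4$ is calibrated to $\dim X=5$, where the hypothesis says exactly that $\wedge^2E$ is ACM; larger dimension only provides more cohomological room, and one cannot trade dimension for it by cutting with a hyperplane, since passing from $X$ to $X\cap H$ loses one cohomological degree for $\wedge^2E$ and so breaks the hypothesis.
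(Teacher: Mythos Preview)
Your forward direction is fine, and the inductive reduction ``if a line bundle splits off, we are done'' is correct. The gap is exactly where you flag it: the passage from ``$Z$ is arithmetically Cohen--Macaulay'' to ``$E$ splits''. That step is not a technicality --- it is the whole content of the theorem. An ACM subscheme $Z\subset X$ of codimension $r$ is in no way forced to be a complete intersection, and the Koszul co-complex of $s$ is \emph{always} a resolution of $\det E(rk)|_Z$ whether or not $E$ splits; comparing it with the minimal resolution of $\cO_Z$ cannot by itself show that the terms $\wedge^jE$ are sums of line bundles. Your proposed ``minimal twist'' fix does not help either: for an indecomposable $E$, \emph{no} section of any $E(k)$ generates a direct summand, so arguing that a well-chosen $s$ gives a retraction is circular. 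Finally, for $r\ge5$ the Koszul complex involves $\wedge^jE$ with $3\le j\le r-2$; your inductive hypothesis concerns lower-rank bundles, not higher exterior powers of the same $E$, so it gives no control there.

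The paper's proof takes a completely different route and never looks at zero loci of sections. The key observation is a four-step periodicity: splicing the exact sequences coming from the two-periodic resolution $\cdots\to\bar F_1\to\bar F_0\to E\to0$ on $X$ with the symmetric/exterior filtration yields
\[
0\to\wedge^2E(-2d)\to\wedge^2\bar F_1\to\bar F_1\otimes E^{\sigma}\to E^{\sigma}\otimes\bar F_0\to\wedge^2\bar F_0\to\wedge^2E\to0,
\]
whose four interior terms are ACM; hence $H^i_*(\wedge^2E)\cong H^{i+4}_*(\wedge^2E(-2d))$, so vanishing for $i=1,2,3,4$ propagates to all $1\le i\le n-1$ and $\wedge^2E$ is genuinely ACM. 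A separate argument (using the syzygy bundle $E^{\sigma}$ and the associated cokernel sheaf) shows $\wedge^2E^{\sigma}$ is then ACM, and another splice shows $S^2E$ is ACM once $n\ge5$. Thus $E\otimes E=\wedge^2E\oplus S^2E$ is ACM, and the conclusion follows from the Huneke--Wiegand theorem: if $M\otimes_R M$ is maximal Cohen--Macaulay over a hypersurface ring $R$, then $M$ is free. That algebraic input is what replaces your missing geometric step.
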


%

\section{Preliminaries}

In this section, we will recall some standard facts about arithmetically Cohen-Macaulay bundles over hypersurfaces. 

Let $X \subset \p^{n+1}$ be a degree $d$ smooth hypersurface given by homogeneous polynomial $f = 0$. Let $E$ be an ACM bundle of rank $r$ on $X$. By Serre's duality, $E^{\vee}$ is also ACM. 

For notational ease, we will use $\,\, \wtil{} \,\,$ to denote a vector bundle on $\p^{n+1}$. By Hilbert's syzygy theorem, being a coherent sheaf on $\p^{n+1}$, $E$ admits a finite length minimal free resolution $$0 \rar \wtil{F_t} \rar \wtil{F_{t-1}} \rar \ldots \rar \wtil{F_1} \rar \wtil{F_0} \rar E \rar 0$$

where $\wtil{F_i}$ are direct sums of the form $\oplus_{j} \cO_{\p^{n+1}}(a_j)$. By minimality of the resolution and the ACM condition on $E$, the first syzygy $\wtil{K} = \text{Ker}(\wtil{F_0} \rar E)$ is an ACM bundle on $\p^{n+1}$ and therefore is a split bundle by Horrock's criterion. Thus the minimal free resolution of $E$ on $\p^{n+1}$ is of the form \begin{align}
\label{eqn_E_minimal_resolution_p} 0 \rar \wtil{F_1} \xrightarrow{\phi} \wtil{F_0} \rar E \rar 0
\end{align}

Localizing at the generic point, one checks that the ranks of $\wtil{F_1}$ and $\wtil{F_0}$ are same. Restricting the above resolution to $X$ gives,
$$
0 \rar Tor^1_{\p^{n+1}}(E, \cO_X) \rar \bar{F_1} \rar \bar{F_0} \rar E \rar 0
$$

where one computes the $Tor$ term by tensoring $0 \rar \cO_{\p^{n+1}}(-d) \xrightarrow{\times f} \cO_{\p^{n+1}} \rar \cO_X \rar 0$ with $E$ to get $Tor^1_{\p^{n+1}}(E, \cO_X) = E(-d)$ as multiplication by $f$ vanishes on $X$. Thus the above four term sequence breaks up as
\begin{align}  \label{eqn_E_1_step}
0 \rar E^{\sigma} \rar \bar{F_0} \rar E \rar 0
\end{align}
\begin{align}  \label{eqn_G_E}
0 \rar E(-d) \rar \bar{F_1} \rar E^{\sigma} \rar 0
\end{align}
where $\bar{F_i} = \wtil{F_i} \otimes \cO_X$ are split bundles over $X$ of rank $m$ and $E^{\sigma} := \text{Ker}(\bar{F_0} \onto E)$ is an arithmetically Cohen-Macaulay bundle on $X$.

We state the following facts (without proof) about matrix factorization theory of Eisenbud and the connection between $E$ and $E^{\sigma}$. We choose a matrix (with homogeneous polynomial entries) to represent the map $\phi: \wtil{F_1} \rar \wtil{F_0}$ and henceforth we will use the symbol $\phi$ interchangeably to represent either the matrix or the map. Then
\begin{enumerate}
\item There exists an injective map $\psi: \wtil{F_0}(-d) \rar \wtil{F_1}$ such that $\phi \psi = \psi \phi = f \mathbb{1}$ where $\mathbb{1}$ denotes the identity matrix.
\item Coker$(\psi) = E^{\sigma}$ and $E$ is indecomposable if and only if $E^{\sigma}$ is indecomposable.
\item $0 \rar \wtil{F_0}(-d) \rar \wtil{F_1} \rar E^{\sigma} \rar 0$ is a minimal free resolution of $E^{\sigma}$.
\end{enumerate}

For details, we refer to section 6 of \cite{Eisen81} and section 2 of \cite{C-H}.

\begin{lemma} \label{lemma_sheaf_resolution_1_length_is_reflexive}
Let $f$ be any homogeneous (perhaps reducible) polynomial of degree $d$. Let $X = V(f) \subset \p^{n+1}$ be the vanishing set. Suppose $\cF$ be any coherent sheaf on $X$ which admits a free resolution on $\p^{n+1}$ of the form $$0 \rar \wtil{F_1} \rar \wtil{F_0} \rar \cF \rar 0$$ where $\wtil{F_i}$ are direct sum of line bundles on $\p^{n+1}$. Then $\cF$ is a reflexive sheaf on $X$.
\end{lemma}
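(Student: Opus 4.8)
The plan is to descend the given length-$1$ resolution to the level of graded modules over the homogeneous coordinate ring, read off the depth via the Auslander--Buchsbaum formula, and finish with Gorenstein duality. Write $S = k[x_0,\dots,x_{n+1}]$ and $R = S/(f)$; since $f$ is a nonzero (hence nonzerodivisor) element of the regular ring $S$, the ring $R$ is a hypersurface ring, in particular Gorenstein, regardless of how singular or nonreduced $X$ is. A morphism $\bigoplus_i \cO_{\p^{n+1}}(a_i) \to \bigoplus_j \cO_{\p^{n+1}}(b_j)$ is the sheafification of a unique degree-zero homomorphism $\varphi\colon \bigoplus_i S(a_i) \to \bigoplus_j S(b_j)$ of graded free $S$-modules, so the given map $\wtil{F_1}\to\wtil{F_0}$ arises this way; set $M := \operatorname{coker}(\varphi)$, so the sheafification of $M$ is $\cF$. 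Because the sheaf map is injective, $\ker\varphi$ is a graded submodule of the torsion-free $S$-module $\bigoplus_i S(a_i)$ whose sheafification vanishes; such a submodule is $\mathfrak{m}$-power-torsion, hence zero. Thus $0 \to \bigoplus_i S(a_i) \xrightarrow{\varphi} \bigoplus_j S(b_j) \to M \to 0$ is exact and $\operatorname{pd}_S M \le 1$.

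By Auslander--Buchsbaum, $\operatorname{depth}_S M = \dim S - \operatorname{pd}_S M \ge (n+2) - 1 = n+1$. In particular $\operatorname{depth} M \ge 1$, so $M$ has no nonzero submodule of finite length. Since $\cF$ is a sheaf on $X$ we have $f\cdot\cF = 0$, hence the sheafification of $fM$ is zero and $fM \subseteq M$ has finite length; therefore $fM = 0$ and $M$ is a finitely generated graded $R$-module. As $\operatorname{Supp}\cF \subseteq X$ we get $\dim_R M \le \dim R = n+1$, while $\operatorname{depth}_R M = \operatorname{depth}_S M \ge n+1$, so $\operatorname{depth}_R M = \dim_R M = \dim R$ and $M$ is a maximal Cohen--Macaulay $R$-module (the case $M = 0$, i.e.\ $\cF = 0$, being trivial).

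It remains to note that a maximal Cohen--Macaulay module over a Gorenstein ring is reflexive. Since $M$ is maximal Cohen--Macaulay over $R$, its sheafification $\cF$ is locally maximal Cohen--Macaulay on $X$: for each $x \in X$ the stalk $\cF_x$ is a maximal Cohen--Macaulay module over the Gorenstein local ring $\cO_{X,x}$, so its $\cO_{X,x}$-dual is again maximal Cohen--Macaulay and the biduality map $\cF_x \to \cF_x^{\vee\vee}$ is an isomorphism. As reflexivity can be checked stalkwise, $\cF \cong \cF^{\vee\vee}$, i.e.\ $\cF$ is reflexive.

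The genuinely delicate point — and the reason for routing the argument through maximal Cohen--Macaulayness rather than through the familiar criterion ``reflexive $\Leftrightarrow$ torsion-free and $S_2$'' — is that $f$ is allowed to be reducible or nonreduced, so $X$ need be neither integral nor normal; Gorenstein-ness of $R$ and the depth bound are the only inputs that survive this generality. A secondary technical point to handle with care is the passage from the resolution of sheaves to a length-$\le 1$ free resolution of a finitely generated graded $S$-module: taking the cokernel of $\varphi$ at the module level, rather than $\oplus_k H^0(X,\cF(k))$ (whose finite generation is not clear when $\cF$ has lower-dimensional support), is what makes Auslander--Buchsbaum directly applicable.
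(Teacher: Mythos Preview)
Your proof is correct and takes a genuinely different route from the paper's. The paper argues entirely at the sheaf level: it applies $\cHom_{\cO_{\p^{n+1}}}(-,\cO_{\p^{n+1}})$ to the given resolution, identifies the resulting $\cE xt^1$ term with $\cF^{\vee}(d)$ via the ideal sequence $0\to\cO_{\p^{n+1}}(-d)\to\cO_{\p^{n+1}}\to\cO_X\to 0$, and thereby obtains an explicit length-one resolution $0\to\wtil{F_0}^{\vee}(-d)\to\wtil{F_1}^{\vee}(-d)\to\cF^{\vee}\to 0$; repeating the procedure once more returns the original resolution of $\cF$, whence $\cF\cong\cF^{\vee\vee}$. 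Your argument instead lifts to graded modules, reads off the depth via Auslander--Buchsbaum, and invokes the standard fact that maximal Cohen--Macaulay modules over a Gorenstein ring are reflexive. The paper's computation has the side benefit of producing the dual resolution explicitly, and indeed that resolution is reused immediately afterward in the paper; your approach is conceptually cleaner and pinpoints the structural reason (Gorenstein duality for MCM modules), but does not yield that resolution as a by-product. One small remark: the detour through the global graded module $M$ is not essential---the same Auslander--Buchsbaum plus Gorenstein-reflexivity argument runs stalk by stalk directly from the sheaf resolution, which would sidestep the finite-generation concern you raise at the end.
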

\begin{proof} We apply $\cH om(-, \cO_{\p^{n+1}})$ on the resolution of $\cF$ to get
$$
0 \rar \cH om(\cF, \cO_{\p^{n+1}}) \rar \wtil{F_0}^{\vee} \rar \wtil{F_1}^{\vee} \rar \cE xt^1(\cF, \cO_{\p^{n+1}}) \rar 0
$$

First term vanishes. To compute the $\cE xt$ term, we apply $\cH om(\cF, -)$ on $$0 \rar \cO_{\p^{n+1}}(-d) \rar \cO_{\p^{n+1}} \rar \cO_X \rar 0$$ to get $$0 \rar \cH om(\cF, \cO_{\p^{n+1}}) \rar \cH om(\cF, \cO_{X}) \rar  \cE xt^1(\cF, \cO_{\p^{n+1}})(-d) \xrightarrow{\times f}$$

Here the first term vanishes as before and the last map (multiplication by $f$) vanishes as the sheaves are supported on $X$. Thus we get $\cE xt^1(\cF, \cO_{\p^{n+1}}) \cong \cF^{\vee}(d)$ and a resolution of $\cF^{\vee}$ on $\p^{n+1}$ as
\begin{align} \label{equation_minimal_resolution_on_p_E_dual}
0 \rar \wtil{F_0}^{\vee}(-d) \rar \wtil{F_1}^{\vee}(-d) \rar \cF^{\vee} \rar 0
\end{align}

Applying the whole process once again to the above resolution of $\cF^{\vee}$ we get the following resolution of $\cF^{\vee \vee}$ $$0 \rar \wtil{F_1} \rar \wtil{F_0} \rar \cF^{\vee \vee} \rar 0$$

Comparing with the resolution of $\cF$, one gets the claim.
\end{proof}

Given a short exact sequence of vector bundles $0 \rar E_1 \rar E_2 \rar E_3 \rar 0$ on a variety $X$, there exists a resolution of the $k$'th exterior power $\wedge^k \,E_3$,
\begin{align} \label{equation_sym_wedge}
0 \rar S^k E_1 \rar S^{k-1} E_1 \otimes \wedge^1 \,E_2 \rar \ldots \wedge^{k} E_2 \rar \wedge^k \,{E_3} \rar 0
\end{align}

Dually, we also have a resolution of $k$'th symmetric power,
\begin{align} \label{equation_wedge_sym}
0 \rar \wedge^k \,E_1 \rar \wedge^k \,E_2 \rar \wedge^{k-1} \,E_2 \otimes S^1 E_3 \rar \ldots \wedge^1 E_2 \otimes S^{k-1} E_3 \rar S^{k} E_3 \rar 0
\end{align}

For details we refer the reader to \cite{Buchsbaum-Eisen1975}.
\section{A cokernel sheaf}

Suppose $\text{rank}\,\wtil{F_0} = \text{rank}\,\wtil{F_1} = m$. Fix any integer $k \leq \text{min}\{\text{rank}(E), \text{rank}(E^{\sigma})\}$. Let $X_k = V(f^k)$ denote the scheme-theoretic $k$'th thickening of $X \subset \p^{n+1}$.

We consider the $k$'th exterior power of the map $\phi:\wtil{F_1} \rar \wtil{F_0}$ in equation \eqref{eqn_E_minimal_resolution_p} and denote the cokernel sheaf by  $\cF_k$
\begin{align} \label{equation_k_exterior_power}
0 \rar \wedge^k  \wtil{F_1} \xrightarrow{\wedge^k \phi} \wedge^k  \wtil{F_0} \rar \cF_k \rar 0
\end{align}

The following lemma states some properties of the sheaf $\cF_k$. Our proof is similar to that in section 2 of \cite{M-R-R} where the case when $E$ is a rank 2 ACM bundle and $k = 2$ was studied. 

\begin{lemma} \label{lemma_cF_properties} 
\begin{enumerate}
\item $\cF_k$ is a coherent $\cO_{X_k}$-module where $X_k$ is the thickened hypersurface defined scheme theoretically by $f^k$.
\item $\bar{\cF_k} := \cF_k \otimes \cO_X$ is a vector bundle on $X$ of rank $\binom{m}{k} - \binom{m-r}{k}$
\item $\cF_k$ is an ACM and reflexive sheaf on $X_k$.
\end{enumerate}  
\end{lemma}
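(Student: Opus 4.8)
The plan is to extract all three assertions from the matrix factorization of $\phi$ together with the two short exact sequences \eqref{eqn_E_1_step} and \eqref{eqn_G_E}, so that nothing beyond what is already recalled above is needed. First I would note that \eqref{equation_k_exterior_power} really is exact: since $E$ is supported on $X$, localizing \eqref{eqn_E_minimal_resolution_p} at the generic point of $\p^{n+1}$ shows $\phi$ is generically an isomorphism, hence so is $\wedge^k\phi$, and a map out of the torsion-free sheaf $\wedge^k\wtil{F_1}$ that is injective at the generic point is injective. For (1) I would apply $\wedge^k$ to the identity $\phi\psi = f\,\mathbb{1}$ (a map $\wtil{F_0}(-d)\rar\wtil{F_0}$), where $\psi\colon\wtil{F_0}(-d)\rar\wtil{F_1}$ comes from the matrix factorization; since $\wedge^k$ of multiplication by the scalar $f$ is multiplication by $f^k$, this yields $(\wedge^k\phi)(\wedge^k\psi) = f^k\,\mathbb{1}$ as a map $(\wedge^k\wtil{F_0})(-kd)\rar\wedge^k\wtil{F_0}$. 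Hence $f^k$ sends every local section of $\wedge^k\wtil{F_0}$ into $\operatorname{im}(\wedge^k\phi)$, so $f^k$ annihilates $\cF_k$ and $\cF_k$ is a coherent $\cO_{X_k} = \cO_{\p^{n+1}}/(f^k)$-module.

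For (2) I would restrict \eqref{equation_k_exterior_power} to $X$. Because $-\otimes\cO_X$ is right exact and exterior powers commute with base change, $\bar{\cF_k} = \operatorname{coker}\big(\wedge^k\bar\phi\colon\wedge^k\bar{F_1}\rar\wedge^k\bar{F_0}\big)$, where $\bar\phi$ is the restriction of $\phi$. By \eqref{eqn_E_1_step} and \eqref{eqn_G_E}, $\bar\phi$ factors as the composite $\bar{F_1}\onto E^{\sigma}\into\bar{F_0}$, and $E^{\sigma}\into\bar{F_0}$ is a subbundle inclusion (its quotient $E$ is locally free), with $\operatorname{rank}E^{\sigma} = m-r$. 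Applying $\wedge^k$, the surjection stays surjective and the subbundle inclusion induces a subbundle inclusion $\wedge^k E^{\sigma}\into\wedge^k\bar{F_0}$, so $\operatorname{im}(\wedge^k\bar\phi) = \wedge^k E^{\sigma}$ as a subbundle and $\bar{\cF_k}\isom\wedge^k\bar{F_0}/\wedge^k E^{\sigma}$. This is locally free of rank $\binom{m}{k}-\binom{m-r}{k}$, which is (2).

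For (3), reflexivity follows from Lemma \ref{lemma_sheaf_resolution_1_length_is_reflexive} applied with the degree-$kd$ polynomial $f^k$ in place of $f$: by \eqref{equation_k_exterior_power} the sheaf $\cF_k$ has a two-term resolution on $\p^{n+1}$ by direct sums of line bundles (the exterior powers of the $\wtil{F_i}$), hence is reflexive on $X_k$. For the ACM property I would run \eqref{equation_k_exterior_power} through the long exact sequence of $H^\bullet_*(\p^{n+1},-)$: a direct sum of line bundles on $\p^{n+1}$ has vanishing cohomology in degrees $1,\dots,n$, so $H^i_*(\p^{n+1},\wedge^k\wtil{F_0}) = 0$ and $H^{i+1}_*(\p^{n+1},\wedge^k\wtil{F_1}) = 0$ for $1\le i\le n-1$, forcing $H^i_*(\p^{n+1},\cF_k) = 0$ in that range; since $\cF_k$ is supported on $X_k$ and pushforward along the closed immersion $X_k\into\p^{n+1}$ is exact, $H^i_*(X_k,\cF_k) = H^i_*(\p^{n+1},\cF_k) = 0$ for $1\le i\le n-1$, i.e. $\cF_k$ has no intermediate cohomology on $X_k$. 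The one step demanding genuine care is the bundle identification in (2): checking that $\wedge^k$ carries the subbundle inclusion $E^{\sigma}\into\bar{F_0}$ to a subbundle inclusion and that this pins down the image of $\wedge^k\bar\phi$; everything else is a direct consequence of facts already in place.
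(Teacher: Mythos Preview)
Your proof is correct and, for parts (1) and (2), takes a more global route than the paper. The paper proceeds by localizing at points of $X$ and putting $\phi$ into the diagonal form $\mathrm{diag}(f,\dots,f,1,\dots,1)$, so that $\wedge^k\phi$ is explicitly diagonal with entries $f^{k-i}$ appearing $\binom{r}{k-i}\binom{m-r}{i}$ times; this gives the full local structure of $\cF_k$ as $\bigoplus_i \cO_{X_{k-i}}^{\oplus\binom{r}{k-i}\binom{m-r}{i}}$ and reads off (1) and (2) from that. You instead deduce (1) from the single identity $(\wedge^k\phi)(\wedge^k\psi)=f^k\,\mathbb{1}$ coming from the matrix factorization, and (2) from the factorization $\bar\phi\colon \bar F_1\onto E^\sigma\into\bar F_0$ together with the fact that exterior powers preserve subbundle inclusions. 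Your argument is cleaner and in fact yields the identification $\bar{\cF_k}\cong\wedge^k\bar F_0/\wedge^k E^\sigma$ immediately, which is exactly sequence \eqref{equation_k_ext_pow_rest_X_G} that the paper establishes separately afterwards; conversely, the paper's diagonal computation gives finer local information (the decomposition of $\cF_k$ by thickening level) that your approach does not see, though that extra information is not used later. Part (3) is identical in both.
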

\begin{proof} First two claims can be verified locally. By localising on $X$, one can assume that equation \eqref{eqn_E_minimal_resolution_p} looks like $$ 0 \rar \cO_p^{\oplus m} \xrightarrow{\phi} \cO_p^{\oplus m} \rar E_p \rar 0$$ and the matrix $\phi$ is given by the $m \times m$ diagonal matrix $$\{f,\ldots\ldots, f, 1, \ldots ,1\}$$ where $f$ appears $r = \text{rank}(E)$ times and $1$ appears $m-r$ times on the diagonal. Then locally the matrix $\wedge^k \, \phi$ is the diagonal matrix $$\{f^k, \ldots f^k, f^{k-1} \ldots f^{k-1}, f^{k-2}, \ldots \ldots f, 1,1,\ldots 1\}$$

where $f^{k-i}$ appears $\binom{r}{k-i}\binom{m-r}{i}$ times on the diagonal. In particular, locally $\cF_k$ is of the form $$\cO_{X_k}^{\oplus \binom{r}{k}} \oplus \cO_{X_{k-1}}^{\oplus \binom{r}{k-1} \cdot \binom{m-r}{1}} \oplus \ldots \oplus \cO_{X_{k-i}}^{\oplus \binom{r}{k-i} \cdot \binom{m-r}{i}} \ldots \oplus \cO_{X}^{\oplus \binom{r}{1} \cdot \binom{m-r}{k-1}}$$

This proves the first claim and also that $\bar{\cF_k} = \cF_k \otimes \cO_X$ is a vector bundle on $X$. Claim about the rank is verified by the above local description of $\cF_k$ and the combinatorial identity $$\binom{m}{k} = \sum_i \binom{r}{i} \binom{m-r}{k-i}$$

By equation \eqref{equation_k_exterior_power}, one easily sees that $\cF_k$ is an ACM sheaf on $X_k$. Lemma \ref{lemma_sheaf_resolution_1_length_is_reflexive} completes the proof by showing that $\cF_k$ is a reflexive sheaf.
\end{proof}

We now restrict sequence \eqref{equation_k_exterior_power} to $X$
\begin{align} \label{equation_k_ext_pow_rest_X}
0 \rar Tor^1_{\p^{n+1}}(\cF_k, \cO_X) \rar \wedge^k  \bar{F_1} \rar \wedge^k  \bar{F_0} \rar \bar{\cF_k} \rar 0
\end{align}

This is a sequence of vector bundles and the $Tor$ term is a vector bundle of same rank as $\bar{\cF_k}$. In fact, the map $F_1 \rar F_0$ factors via $E^{\sigma}$, therefore by functoriality of exterior product, the map  $\wedge^k  \,\bar{F_1} \rar \wedge^k  \,\bar{F_0}$ factors via $\wedge^k \, E^{\sigma}$ and the sequence \eqref{equation_k_ext_pow_rest_X} breaks up as
\begin{align} \label{equation_k_ext_pow_rest_X_Tor}
0 \rar Tor^1_{\p^{n+1}}(\cF_k, \cO_X) \rar \wedge^k  \bar{F_1} \rar \wedge^k \,E^{\sigma} \rar 0 
\end{align}

and
\begin{align} \label{equation_k_ext_pow_rest_X_G}
 0 \rar \wedge^k \,E^{\sigma} \rar \wedge^k  \bar{F_0} \rar \bar{\cF_k} \rar 0
\end{align}

Thus the $Tor$ term appears as the first term in the filtration of $k$'th exterior power of $\bar{F_1}$ derived from the sequence $0 \rar E(-d) \rar \bar{F_1} \rar E^{\sigma} \rar 0$. We can say more,
\begin{lemma}
$Tor^1_{\p^{n+1}}(\cF_k, \cO_X) \cong \overline{\cF_k^{\vee}}^{\vee}(-kd)$
\end{lemma}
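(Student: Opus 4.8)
The plan is to produce an explicit free resolution of $\cF_k^{\vee}$ on $\p^{n+1}$, restrict it to $X$, and match the outcome against the $\cO_X$-dual of the four term sequence \eqref{equation_k_ext_pow_rest_X}.

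First I would note that \eqref{equation_k_exterior_power} is a length one resolution of $\cF_k$ on $\p^{n+1}$ by split bundles, since $\wedge^k$ of a direct sum of line bundles is again a direct sum of line bundles; the relevant ambient hypersurface is now $X_k = V(f^k)$, of degree $kd$. Hence Lemma \ref{lemma_sheaf_resolution_1_length_is_reflexive} applies verbatim with $f^k$ in place of $f$. Besides reasserting that $\cF_k$ is reflexive and that $\cH om_{\p^{n+1}}(\cF_k, \cO_{\p^{n+1}}) = 0$, its proof yields the length one resolution
$$0 \rar (\wedge^k \wtil{F_0})^{\vee}(-kd) \xrightarrow{(\wedge^k\phi)^{\vee}} (\wedge^k \wtil{F_1})^{\vee}(-kd) \rar \cF_k^{\vee} \rar 0$$
on $\p^{n+1}$, whose only nonzero map is the transpose of $\wedge^k\phi$. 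Restricting to $X$ (the two free terms have vanishing $Tor$) and twisting by $kd$ then exhibits $\overline{\cF_k^{\vee}}(kd)$ as the cokernel of the restricted transpose map $(\overline{\wedge^k\phi})^{\vee} \colon (\wedge^k\bar{F_0})^{\vee} \rar (\wedge^k\bar{F_1})^{\vee}$.

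Next I would dualize \eqref{equation_k_ext_pow_rest_X} over $\cO_X$. This is exact term by term because \eqref{equation_k_ext_pow_rest_X} is a sequence of vector bundles on $X$ (including the $Tor$ term, as recorded just after that equation, and with $\wedge^k E^{\sigma}$ locally free since $E^{\sigma}$ is). Dualizing the two short exact sequences \eqref{equation_k_ext_pow_rest_X_Tor} and \eqref{equation_k_ext_pow_rest_X_G} and splicing gives a four term exact sequence
$$0 \rar \bar{\cF_k}^{\vee} \rar (\wedge^k\bar{F_0})^{\vee} \xrightarrow{(\overline{\wedge^k\phi})^{\vee}} (\wedge^k\bar{F_1})^{\vee} \rar Tor^1_{\p^{n+1}}(\cF_k, \cO_X)^{\vee} \rar 0,$$
whose middle arrow is literally the same restricted transpose map as above (transposition commutes with restriction for maps of free modules). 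Comparing the two descriptions of this cokernel gives $Tor^1_{\p^{n+1}}(\cF_k, \cO_X)^{\vee} \cong \overline{\cF_k^{\vee}}(kd)$.

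Finally, dualizing this isomorphism once more over $\cO_X$ and using that $Tor^1_{\p^{n+1}}(\cF_k, \cO_X)$ is locally free, hence reflexive, gives $Tor^1_{\p^{n+1}}(\cF_k, \cO_X) \cong \overline{\cF_k^{\vee}}^{\vee}(-kd)$, as claimed. The only delicate aspect is bookkeeping — keeping the twist by $kd$ consistent, checking that restriction commutes with taking the transpose matrix, and making sure every sheaf dualized is locally free — all of which is supplied by Lemma \ref{lemma_cF_properties} and the remarks around \eqref{equation_k_ext_pow_rest_X}; there is no serious geometric obstacle, the content being entirely the ``transpose of a matrix factorization'' symmetry already implicit in \eqref{equation_k_ext_pow_rest_X}.
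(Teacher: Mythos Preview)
Your proof is correct and follows essentially the same approach as the paper: obtain a length one free resolution of $\cF_k^{\vee}$ on $\p^{n+1}$, restrict to $X$, and compare with the $\cO_X$-dual of the four term sequence \eqref{equation_k_ext_pow_rest_X}. The only organizational difference is that the paper introduces an auxiliary sheaf $\cF'_k$ as the cokernel of the $k$'th exterior power of the resolution of $E^{\vee}$ and then proves $\cF'_k \cong \cF_k^{\vee}$ separately, whereas you obtain the resolution of $\cF_k^{\vee}$ in one step by invoking Lemma \ref{lemma_sheaf_resolution_1_length_is_reflexive} directly for $\cF_k$ on $X_k = V(f^k)$; this is a minor streamlining, not a different idea.
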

\begin{proof} We consider the $k$'th exterior power of the minimal resolution of $E^{\vee}$ given by sequence \eqref{equation_minimal_resolution_on_p_E_dual}
\begin{align} \label{equation_defining_F'}
0 \rar (\wedge^k \wtil{F_0}^{\vee})(-kd) {\rar} (\wedge^k \wtil{F_1}^{\vee})(-kd) \rar \cF'_k \rar 0
\end{align}

where $\cF'_k$ is defined by the sequence. Restricting to $X$ gives
$$
0 \rar Tor^1_{\p^{n+1}}(\cF'_k,\cO_X) \rar (\wedge^k \bar{F_0}^{\vee})(-kd) {\rar} (\wedge^k \bar{F_1}^{\vee})(-kd) \rar \bar{\cF'_k} \rar 0
$$

As in lemma \ref{lemma_cF_properties} one can verify (by looking at the exterior power matrix locally) that $\bar{\cF}'_k$ is a vector bundle and thus above is a exact sequence of vector bundles. So we can dualize (and then twist by $-kd$) to get:
\begin{align} 
0 \rar \bar{\cF'}^{\vee}_k(-kd)  \rar \wedge^k {\bar{F_1}} {\rar} \wedge^k {\bar{F_0}} \rar Tor^1(\cO_X, \cF'_k)^{\vee}(-kd) \rar 0
\end{align}

Comparing with equation \eqref{equation_k_ext_pow_rest_X}, we get 
\begin{align}
Tor^1(\cF_k, \cO_X, ) \cong \bar{\cF'}^{\vee}_k(-kd)
\end{align}

We complete the proof by showing that $\cF'_k \cong \cF^{\vee}_k$. Applying $\cH om(-, \cO_{\p^{n+1}})$ to sequence \eqref{equation_defining_F'} and simplifying as in the proof of Lemma \ref{lemma_sheaf_resolution_1_length_is_reflexive}, we get 
\begin{align} \label{sequence_F'_wedge_dual} 0 \rar \wedge^k \wtil{F_1} \rar \wedge^k \wtil{F_0} \rar \cF'^{\vee}_k \rar 0 
\end{align} 

Comparing this with the sequence \eqref{equation_k_exterior_power} and using the fact that by Lemma \ref{lemma_sheaf_resolution_1_length_is_reflexive}, $\cF_k, \cF'_k$ are both reflexive sheaves, we get that $\cF^{\vee}_k \cong \cF'_k$. 
\end{proof}

\begin{lemma} \label{lemma_wedge_k_E_kernel_Tor_terms}
There exists a short exact sequence $$0 \rar \wedge^k E(-kd) \rar Tor^1_{\p}(\cF_k, \cO_X) \rar Tor^1_{X_k}(\cF_k, \cO_X) \rar  0$$
\end{lemma}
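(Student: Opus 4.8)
The plan is to compute both $\mathcal{T}or$-sheaves from explicit free resolutions of $\cO_X$ --- one over $\p^{n+1}$, one over $X_k$ --- and then to identify the kernel of the natural comparison map between them.

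Resolving $\cO_X$ over $\p^{n+1}$ by $0 \rar \cO_{\p^{n+1}}(-d) \xrightarrow{\times f} \cO_{\p^{n+1}} \rar \cO_X \rar 0$ and tensoring with $\cF_k$ gives $Tor^i_{\p}(\cF_k,\cO_X) = 0$ for $i \geq 2$ and $Tor^1_{\p}(\cF_k,\cO_X) \isom \text{Ker}\big(\cF_k(-d) \xrightarrow{\times f} \cF_k\big)$, the kernel being in general nonzero since $\cF_k$, as an $\cO_{X_k}$-module, is killed by $f^k$ but not by $f$. On the other hand $\cO_X = \cO_{X_k}/f\cO_{X_k}$ and $\text{Ann}_{\cO_{X_k}}(f) = f^{k-1}\cO_{X_k}$, so over $X_k$ the sheaf $\cO_X$ has the free resolution
\[
\cdots \rar \cO_{X_k}(-kd) \xrightarrow{\times f^{k-1}} \cO_{X_k}(-d) \xrightarrow{\times f} \cO_{X_k} \rar \cO_X \rar 0 ;
\]
tensoring with $\cF_k$ gives
\[
Tor^1_{X_k}(\cF_k,\cO_X) \isom \frac{\text{Ker}\big(\cF_k(-d) \xrightarrow{\times f} \cF_k\big)}{\text{Im}\big(\cF_k(-kd) \xrightarrow{\times f^{k-1}} \cF_k(-d)\big)} .
\]
Since $f \cdot f^{k-1}\cF_k = f^k\cF_k = 0$, the image of $\times f^{k-1}$ is contained in $\text{Ker}(\times f)$, so we obtain the tautological short exact sequence
\[
0 \rar \text{Im}\big(\cF_k(-kd) \xrightarrow{\times f^{k-1}} \cF_k(-d)\big) \rar Tor^1_{\p}(\cF_k,\cO_X) \rar Tor^1_{X_k}(\cF_k,\cO_X) \rar 0 .
\]

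It then remains to identify the left-hand term with $\wedge^k E(-kd)$. Since $\ov{F_0} \onto E$ we also have $\wedge^k\wtil{F_0} \onto \wedge^k\ov{F_0} \onto \wedge^k E$, and the composite $\wedge^k\wtil{F_1} \xrightarrow{\wedge^k\phi} \wedge^k\wtil{F_0} \rar \wedge^k E$ vanishes: it factors through $\wedge^k\ov{F_1}$ (the target being an $\cO_X$-module), where $\wedge^k\ov{\phi}$ factors through $\wedge^k E^{\sigma}$ by \eqref{equation_k_ext_pow_rest_X_Tor}--\eqref{equation_k_ext_pow_rest_X_G}, while $E^{\sigma} \rar \ov{F_0} \rar E$ is zero by \eqref{eqn_E_1_step} (so $\wedge^k$ of it is zero). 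Hence $\wedge^k\wtil{F_0} \rar \wedge^k E$ descends to a canonical surjection $p : \cF_k \onto \wedge^k E$. Comparing $p$ with the local diagonal form of $\wedge^k\phi$ from Lemma \ref{lemma_cF_properties} --- where, over a point of $X$, $\cF_k$ is a direct sum of copies of $\cO_{X_j}$, $1 \leq j \leq k$, with exactly $\binom{r}{k} = \text{rank}(\wedge^k E)$ copies of $\cO_{X_k}$, and $p$ is the projection onto those copies of $\cO_{X_k}$ followed by reduction modulo $f$ --- one checks that $\text{Ker}(p) = \text{Ann}_{\cF_k}(f^{k-1})$. Consequently, by the first isomorphism theorem,
\[
\text{Im}\big(\cF_k(-kd) \xrightarrow{\times f^{k-1}} \cF_k(-d)\big) \isom \big(\cF_k/\text{Ann}_{\cF_k}(f^{k-1})\big)(-kd) = \big(\cF_k/\text{Ker}(p)\big)(-kd) \isom \wedge^k E(-kd),
\]
which gives the lemma.

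The single non-formal step is the identification $\text{Ker}(p) = \text{Ann}_{\cF_k}(f^{k-1})$, which is stalkwise and follows at once from the explicit diagonal shape of $\wedge^k\phi$; everything else is the two free resolutions above together with the first isomorphism theorem. As a consistency check one may look at the degenerate case $k = 1$, where $X_1 = X$, $\cF_1 = E$, $Tor^1_{X_1}(\cF_1,\cO_X) = 0$, and the sequence collapses to $0 \rar E(-d) \rar E(-d) \rar 0$.
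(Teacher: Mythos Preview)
Your argument is correct, but it proceeds along a different axis from the paper's. The paper computes both Tor groups by resolving the \emph{first} variable: it restricts the defining sequence $0 \rar \wedge^k\wtil{F_1} \rar \wedge^k\wtil{F_0} \rar \cF_k \rar 0$ to $X_k$, obtaining the 2-periodic free $\cO_{X_k}$-resolution of $\cF_k$ coming from the matrix factorization $(\wedge^k\phi,\wedge^k\psi)$, and then tensors with $\cO_X$. This yields $Tor^1_{X_k}(\cF_k,\cO_X) \cong \text{Ker}(\wedge^k\bar\phi)/\text{Im}(\wedge^k\bar\psi)$, with the numerator equal to $Tor^1_{\p}(\cF_k,\cO_X)$ and the denominator identified as $\wedge^k E(-kd)$ because $\bar\psi:\bar F_0(-d) \rar \bar F_1$ factors through $E(-d)$ (its image is precisely $\text{Ker}(\bar F_1 \rar E^\sigma)$). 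You instead resolve the \emph{second} variable $\cO_X$ over $\p$ and over $X_k$, arriving at the same quotient description but with the image term now $f^{k-1}\cF_k(-d)$, which you then identify with $\wedge^k E(-kd)$ via the surjection $p:\cF_k \onto \wedge^k E$ and the stalkwise equality $\text{Ker}(p)=\text{Ann}_{\cF_k}(f^{k-1})$. The paper's route is a bit more economical --- the factorization of $\bar\psi$ through $E(-d)$ is a global fact read off from the matrix factorization, so no local check is needed --- whereas your route makes the role of the $f^{k-1}$-torsion explicit and produces the useful auxiliary map $p$. Both are perfectly valid.
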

\begin{proof} We restrict the sequence \eqref{equation_k_exterior_power} to $X_k$ to get a free $\cO_{X_k}$-resolution of $\cF_k$
$$
\cdots \rar \wedge^k F_1(-kd) \rar \wedge^k F_0(-kd) \rar \wedge^k F_1 \rar \wedge^k F_0 \rar \cF_k \rar 0
$$

Tensoring this resolution with $\cO_X$ gives a complex from which we get 
\begin{align} \label{equation_Tor_X_k_equals_mod_of_wedge_maps}
Tor^1_{X_k}(\cF_k, \cO_X) \cong \dfrac{\text{Ker}(\wedge^k \bar{F_1} \rar \wedge^k \bar{F_0})}{\text{Im}(\wedge^k \bar{F_0}(-kd) \rar \wedge^k \bar{F_1})}
\end{align}

To compute $\text{Ker}(\wedge^k \bar{F_1} \rar \wedge^k \bar{F_0})$, we tensor the sequence \eqref{equation_k_exterior_power} with $\cO_X$ to get $$\text{Ker}(\wedge^k \bar{F_1} \rar \wedge^k \bar{F_0}) \cong Tor^1_{\p}(\cF_k, \cO_X)$$

For the $\text{Im}(\wedge^k \bar{F_0}(-kd) \rar \wedge^k \bar{F_1})$ term, we note that the map $\bar{F_0}(-d) \rar \bar{F_1}$ factors via $E(-d)$ so by functoriality of wedge power, $$\text{Im}(\wedge^k \bar{F_0}(-kd) \rar \wedge^k \bar{F_1}) \cong \wedge^k E(-kd)$$

This completes the proof of the lemma.
\end{proof}
\subsection{A short exact sequence}

Let $\cF$ be any coherent $\cO_{X_k}$-module. The inclusions $X_{k-1} \into \p^{n+1}$ and $X \into X_k$ induces following short exact sequences
\begin{align} \label{sequence_X_into_X_k}
0 \rar \cO_{X_{k-1}}(-d) \rar \cO_{X_k} \rar \cO_X \rar 0
\end{align}
\begin{align} \label{sequence_X_k-minus-1_into_X_k}
0 \rar \cO_{\p}(-(k-1)d) \rar \cO_{\p} \rar \cO_{X_{k-1}} \rar 0
\end{align}

Tensoring both sequences with $\otimes_{\p} \cF$, we get 
\begin{align} \label{sequence_X_into_X_k_tensored_cF_on_p}
0 \rar Tor^1_{\p}(\cF, \cO_{X_{k-1}}(-d)) \rar \cF(-kd) \rar Tor^1_{\p}(\cF, \cO_X) \rar \cF|_{X_{k-1}}(-d) \rar \cF\rar \overline{\cF} \rar 0
\end{align}

\begin{align} \label{sequence_X_k_1_into_p_tensored_cF_on_p}
0 \rar Tor^1_{\p}(\cF, \cO_{X_{k-1}}) \rar \cF(-(k-1)d) \rar \cF \rar \cF|_{X_{k-1}} \rar 0
\end{align}

Similarly, tensoring sequence \eqref{sequence_X_into_X_k} with  $\otimes_{X_k} \cF$, we get
\begin{align} \label{sequence_X_into_X_k_tensored_cF_on_X_k}
0 \rar Tor^1_{X_k}(\cF, \cO_X) \rar \cF|_{X_{k-1}}(-d) \rar \cF\rar \overline{\cF} \rar 0
\end{align}

Comparing sequences \eqref{sequence_X_into_X_k_tensored_cF_on_p} and \eqref{sequence_X_into_X_k_tensored_cF_on_X_k} gives 
\begin{align} \label{sequence_main_sequence_one}
0 \rar Tor^1_{\p}(\cF, \cO_{X_{k-1}})(-d) \rar \cF(-kd) \rar Tor^1_{\p}(\cF, \cO_X) \rar Tor^1_{X_k}(\cF, \cO_X) \rar 0
\end{align}

\begin{lemma} \label{lemma_kernels_are_isomorphic}
With notations as above, $$\text{Ker}[Tor^1_{\p}(\cF, \cO_X) \onto Tor^1_{X_k}(\cF, \cO_X)] \cong \text{Ker}[\cF(-d) \onto \cF|_{X_{k-1}}(-d)]$$
\end{lemma}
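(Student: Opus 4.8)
The plan is to read off each of the two kernels directly from the four-term exact sequences already established and to observe that both are canonically the same quotient of $\cF(-kd)$. First I would use \eqref{sequence_main_sequence_one}: its exactness at $\cF(-kd)$ and at $Tor^1_{\p}(\cF, \cO_X)$ shows that the image of the map $\cF(-kd) \rar Tor^1_{\p}(\cF, \cO_X)$ is exactly $\text{Ker}[Tor^1_{\p}(\cF, \cO_X) \onto Tor^1_{X_k}(\cF, \cO_X)]$, while its kernel is the image of the inclusion $Tor^1_{\p}(\cF, \cO_{X_{k-1}})(-d) \rar \cF(-kd)$. Hence
$$\text{Ker}\big[Tor^1_{\p}(\cF, \cO_X) \onto Tor^1_{X_k}(\cF, \cO_X)\big] \;\cong\; \cF(-kd) \big/ \text{Im}\big( Tor^1_{\p}(\cF, \cO_{X_{k-1}})(-d) \big).$$

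Next I would twist the four-term sequence \eqref{sequence_X_k_1_into_p_tensored_cF_on_p} by $\cO_{\p}(-d)$, obtaining
$$0 \rar Tor^1_{\p}(\cF, \cO_{X_{k-1}})(-d) \rar \cF(-kd) \xrightarrow{\,\times f^{k-1}\,} \cF(-d) \rar \cF|_{X_{k-1}}(-d) \rar 0,$$
where the middle map is multiplication by $f^{k-1}$ since \eqref{sequence_X_k_1_into_p_tensored_cF_on_p} comes from tensoring \eqref{sequence_X_k-minus-1_into_X_k} with $\cF$. Exactness at $\cF(-d)$ then gives $\text{Ker}[\cF(-d) \onto \cF|_{X_{k-1}}(-d)] \cong \cF(-kd)/\text{Im}\big( Tor^1_{\p}(\cF, \cO_{X_{k-1}})(-d) \big)$ as well, so it only remains to match the two occurrences of the subsheaf $Tor^1_{\p}(\cF, \cO_{X_{k-1}})(-d) \subset \cF(-kd)$.

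To do this I would compute both inclusions by hand from the length-one free resolutions over $\p^{n+1}$: $\cO_{X_{k-1}}$ is resolved by $0 \rar \cO_{\p}(-(k-1)d) \xrightarrow{\times f^{k-1}} \cO_{\p} \rar \cO_{X_{k-1}} \rar 0$ and $\cO_{X_k}$ by $0 \rar \cO_{\p}(-kd) \xrightarrow{\times f^{k}} \cO_{\p} \rar \cO_{X_k} \rar 0$, and the inclusion $\cO_{X_{k-1}}(-d) \into \cO_{X_k}$ in \eqref{sequence_X_into_X_k} is covered by the chain map which is multiplication by $f$ in degree $0$ and the identity in degree $1$. Tensoring with $\cF$ and using functoriality of $Tor^1$, both subsheaves are identified with $\text{Ker}\big( \cF(-kd) \xrightarrow{\times f^{k-1}} \cF(-d) \big)$. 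Combining the three displays yields the stated isomorphism.

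I expect the only real friction to be this last bookkeeping step: because the lemma asserts an isomorphism of the two \emph{kernel subsheaves}, not merely an abstract isomorphism of sheaves, one genuinely has to trace the connecting maps — equivalently, the comparison of \eqref{sequence_X_into_X_k_tensored_cF_on_p} and \eqref{sequence_X_into_X_k_tensored_cF_on_X_k} that produced \eqref{sequence_main_sequence_one} — and confirm it matches the $\times f^{k-1}$ description above. Note that no hypothesis on $\dim X$, on smoothness, or on irreducibility of $f$ is needed here.
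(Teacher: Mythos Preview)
Your approach is exactly the paper's: twist sequence \eqref{sequence_X_k_1_into_p_tensored_cF_on_p} by $-d$ and compare it with \eqref{sequence_main_sequence_one}, noting that both four-term sequences share the same left half $0 \rar Tor^1_{\p}(\cF,\cO_{X_{k-1}})(-d) \rar \cF(-kd)$, so the two kernels are the common cokernel of this inclusion. Your extra bookkeeping step, checking via explicit free resolutions that the two copies of $Tor^1_{\p}(\cF,\cO_{X_{k-1}})(-d) \subset \cF(-kd)$ coincide as the kernel of multiplication by $f^{k-1}$, is a point the paper's one-line proof leaves implicit, and your verification is correct.
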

\begin{proof} Twist the sequence \eqref{sequence_X_k_1_into_p_tensored_cF_on_p} by $-d$ and compare it with the sequence \eqref{sequence_main_sequence_one}.
\end{proof}

\begin{proposition} \label{prop_sequence_wedge_k_k-minus-1}
There exists a short exact sequence $$0 \rar \wedge^k E(-(k-1)d) \rar \cF_k \rar \cF_k|_{X_{k-1}} \rar 0$$
\end{proposition}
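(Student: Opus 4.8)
The plan is to combine Lemma~\ref{lemma_wedge_k_E_kernel_Tor_terms} with Lemma~\ref{lemma_kernels_are_isomorphic}, both applied to the coherent $\cO_{X_k}$-module $\cF = \cF_k$ (which is coherent over $X_k$ by Lemma~\ref{lemma_cF_properties}(1)). First I would invoke Lemma~\ref{lemma_wedge_k_E_kernel_Tor_terms}, which exhibits $\wedge^k E(-kd)$ precisely as the kernel of the surjection $Tor^1_{\p}(\cF_k,\cO_X) \onto Tor^1_{X_k}(\cF_k,\cO_X)$. Then Lemma~\ref{lemma_kernels_are_isomorphic}, with $\cF = \cF_k$, identifies this very kernel with $\text{Ker}\bigl[\cF_k(-d) \onto \cF_k|_{X_{k-1}}(-d)\bigr]$, where the displayed map is the natural restriction arising from sequence~\eqref{sequence_X_k_1_into_p_tensored_cF_on_p} twisted by $-d$. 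Stringing the two isomorphisms together yields a canonical identification $\text{Ker}\bigl[\cF_k(-d) \onto \cF_k|_{X_{k-1}}(-d)\bigr] \isom \wedge^k E(-kd)$.

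Next I would twist this identification by $\cO_{\p^{n+1}}(d)$, which is exact, to obtain $\text{Ker}\bigl[\cF_k \onto \cF_k|_{X_{k-1}}\bigr] \isom \wedge^k E(-(k-1)d)$. Since $\cO_{X_k} \onto \cO_{X_{k-1}}$ is surjective, tensoring with $\cF_k$ over $\cO_{X_k}$ shows the restriction map $\cF_k \rar \cF_k|_{X_{k-1}}$ is surjective; combining this with the kernel computation produces the desired short exact sequence $0 \rar \wedge^k E(-(k-1)d) \rar \cF_k \rar \cF_k|_{X_{k-1}} \rar 0$.

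The argument is essentially bookkeeping with the exact sequences assembled in the previous subsection, so I do not expect a serious obstacle. The only points that require care are verifying that the isomorphism of Lemma~\ref{lemma_kernels_are_isomorphic} is compatible with the twist by $\cO(d)$ (immediate, twisting by a line bundle being exact), and confirming that the map appearing in that lemma after twisting really is the canonical restriction $\cF_k \onto \cF_k|_{X_{k-1}}$ and not some a priori different map — this one reads off directly from the derivation of sequence~\eqref{sequence_X_k_1_into_p_tensored_cF_on_p}, where that map is literally induced by $\cO_{\p} \onto \cO_{X_{k-1}}$. One might instead try to prove the Proposition directly by tensoring $0 \rar \cO_X(-(k-1)d) \rar \cO_{X_k} \rar \cO_{X_{k-1}} \rar 0$ with $\cF_k$ over $\cO_{X_k}$, but the kernel term then comes out as $\overline{\cF_k}(-(k-1)d)$ together with a $Tor^1_{X_k}(\cF_k,\cO_{X_{k-1}})$ correction, so the route through the two lemmas is the cleaner one.
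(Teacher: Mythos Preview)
Your proposal is correct and follows exactly the paper's own argument: combine Lemma~\ref{lemma_wedge_k_E_kernel_Tor_terms} with Lemma~\ref{lemma_kernels_are_isomorphic} applied to $\cF=\cF_k$, then twist by $\cO(d)$. You have simply made explicit the twist and the surjectivity check that the paper leaves implicit.
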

\begin{proof} Follows from Lemma \ref{lemma_wedge_k_E_kernel_Tor_terms} and by putting $\cF = \cF_k$ in Lemma \ref{lemma_kernels_are_isomorphic}.
\end{proof}

\section{Proof of the theorem}

We now apply above results for $k = 2$. 

\begin{proposition} \label{lemma_wedge_2_E_ACM_iff_wedge_2_G_ACM}
Let $E$ be an ACM bundle on a smooth hypersurface of dimension $\geq 3$. Then $\wedge^2 \,E$ is ACM if and only if $\wedge^2 \,E^{\sigma}$ is ACM.
\end{proposition}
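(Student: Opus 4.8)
The plan is to prove $H^i_*(X,\wedge^2 E)\isom H^i_*(X,\wedge^2 E^\sigma)$ for $2\le i\le n-1$ together with the single implication ``$\wedge^2 E^\sigma$ ACM $\imply$ $\wedge^2 E$ ACM''; the reverse implication then follows by applying the latter to the ACM bundle $E^\sigma$ in place of $E$ and using $(E^\sigma)^\sigma\isom E(-d)$ (immediate from the matrix factorization facts recalled above), so that $\wedge^2(E^\sigma)^\sigma\isom\wedge^2 E(-2d)$ is ACM whenever $\wedge^2 E$ is. Throughout, $n=\dim X\ge 3$.

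First I would specialise to $k=2$ the two exact sequences already produced. Proposition \ref{prop_sequence_wedge_k_k-minus-1}, with $X_{k-1}=X_1=X$, reads
\[
0\rar\wedge^2 E(-d)\rar\cF_2\sta{r}{\rar}\bar{\cF_2}\rar 0,
\]
and sequence \eqref{equation_k_ext_pow_rest_X_G} reads
\[
0\rar\wedge^2 E^\sigma\rar\wedge^2\bar{F_0}\sta{p}{\rar}\bar{\cF_2}\rar 0.
\]
By Lemma \ref{lemma_cF_properties}, $\cF_2$ is ACM on $X_2$ (which has dimension $n$), so $H^i_*(\cF_2)=0$ for $1\le i\le n-1$; and $\wedge^2\bar{F_0}$ is a direct sum of line bundles on the ACM variety $X$, so $H^i_*(\wedge^2\bar{F_0})=0$ for $1\le i\le n-1$. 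Feeding these vanishings into the long exact cohomology sequences of the two displays, the connecting maps give isomorphisms
\[
H^j_*(\wedge^2 E)\ \isom\ H^{j-1}_*(\bar{\cF_2})\ \isom\ H^j_*(\wedge^2 E^\sigma)\qquad(2\le j\le n-1),
\]
where $n\ge 3$ is what keeps this range nonempty and the boundary degrees $0$ and $n$ out of reach. In degree $1$ the same sequences only yield $H^1_*(\wedge^2 E)=\mathrm{coker}\bigl(H^0_*(r)\bigr)$ and $H^1_*(\wedge^2 E^\sigma)=\mathrm{coker}\bigl(H^0_*(p)\bigr)$; this is where the real work is.

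To compare the two cokernels I would return to the presentation $\cF_2=\mathrm{coker}(\wedge^2\wtil{F_1}\sta{\wedge^2\phi}{\rar}\wedge^2\wtil{F_0})$ on $\p^{n+1}$. Restricting the surjection $\wedge^2\wtil{F_0}\onto\cF_2$ to $X_2$ yields $q\colon\wedge^2\wtil{F_0}|_{X_2}\onto\cF_2$, and tensoring \eqref{sequence_X_into_X_k} (for $k=2$) by $\wedge^2\wtil{F_0}$ yields $s\colon\wedge^2\wtil{F_0}|_{X_2}\onto\wedge^2\bar{F_0}$ with kernel $\wedge^2\bar{F_0}(-d)$. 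Naturality of restriction to $X$ makes the square $r\circ q=p\circ s$ commute. Since $\wedge^2\bar{F_0}(-d)$ is split on $X$, one has $H^1_*(\wedge^2\bar{F_0}(-d))=0$, hence $H^0_*(s)$ is surjective; chasing the commuting square then gives $\mathrm{im}\,H^0_*(p)\subseteq\mathrm{im}\,H^0_*(r)$, i.e. $H^1_*(\wedge^2 E^\sigma)=0\imply H^1_*(\wedge^2 E)=0$. Combined with the isomorphisms in degrees $2\le j\le n-1$ this proves ``$\wedge^2 E^\sigma$ ACM $\imply$ $\wedge^2 E$ ACM'', and hence, by the reduction in the first paragraph, the full equivalence.

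I expect the degree-$1$ comparison to be the crux: the cohomology chase through $\cF_2$ is one degree short at the bottom, and the containment $\mathrm{im}\,H^0_*(p)\subseteq\mathrm{im}\,H^0_*(r)$ has to come from the module-level argument rather than from cohomology bookkeeping. (An alternative, slightly less self-contained route for degree $1$ would be to invoke Serre duality on $X$ together with the identification $(E^\vee)^\sigma\isom(E^\sigma)^\vee(-d)$, obtained by dualising \eqref{eqn_E_1_step}--\eqref{eqn_G_E} and comparing with Lemma \ref{lemma_sheaf_resolution_1_length_is_reflexive}, so as to transfer the degree-$(n-1)$ statement for the ACM bundle $E^\vee$ down to the degree-$1$ statement for $E$.) One must also keep in mind that ``ACM on $X_2$'' means vanishing of $H^i_*$ exactly for $1\le i\le n-1$, so that the cancellations in the long exact sequences hold precisely on the stated range.
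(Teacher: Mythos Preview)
Your proof is correct and follows the paper's approach in its core: you use the same two short exact sequences (\ref{sequence_main_one} and \ref{sequence_main_two} in the paper) together with the ACM property of $\cF_2$ and $\wedge^2\bar{F_0}$ to get the isomorphisms $H^j_*(\wedge^2 E)\isom H^{j-1}_*(\bar{\cF_2})\isom H^j_*(\wedge^2 E^\sigma)$ for $2\le j\le n-1$, exactly as the paper does.

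The only substantive difference is your treatment of the remaining degree $j=1$. The paper handles this via Serre duality: it observes that $E^{\vee}$ is again ACM with $E^{\vee\sigma}\isom E^{\sigma\vee}(-d)$, so the range-$2\le i\le n-1$ argument applied to $E^{\vee}$ dualises to give the missing $H^1_*$ vanishing. You instead give a direct module-level argument, producing the commuting square $r\circ q=p\circ s$ with $H^0_*(s)$ surjective and hence the containment $\mathrm{im}\,H^0_*(p)\subseteq\mathrm{im}\,H^0_*(r)$, which yields the implication in one direction; the other direction then comes from $(E^\sigma)^\sigma\isom E(-d)$. Your alternative route mentioned parenthetically at the end is precisely the paper's argument. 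Both methods are clean; yours avoids invoking Serre duality and the identification $(E^\vee)^\sigma\isom(E^\sigma)^\vee(-d)$, at the cost of a small diagram chase, while the paper's is slightly more symmetric in that it treats $i=1$ and $i=n-1$ on the same footing.
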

\begin{proof} Assume that $\wedge^2 \,E$ is ACM. For $k = 2$, we get following short exact sequences for $E$ (sequence \eqref{equation_k_ext_pow_rest_X_G} and the sequence from Lemma \ref{prop_sequence_wedge_k_k-minus-1})
\begin{align} \label{sequence_main_one}
0 \rar \wedge^2 \,E^{\sigma} \rar \wedge^2 \bar{F_0} \rar \bar{\cF_2} \rar 0
\end{align} 
\begin{align} \label{sequence_main_two}
0 \rar \wedge^2 \,E(-d) \rar \cF_2 \rar \bar{\cF_2} \rar 0
\end{align}

Comparing sequences \eqref{sequence_main_one}, \eqref{sequence_main_two} and using the fact that $\wedge^2 \bar{F_0}, \cF_2$ are all ACM, we get $H^i_*(\wedge^2 \, E^{\sigma}) = 0$ when $i = 2, \ldots n-1$ where $n = \text{dim}(X)$. 

To prove the vanishing for $i = 1$, we note that $E^{\vee}$ is also ACM and $E^{\vee\sigma} \cong E^{\sigma\vee}(-d)$, e.g. by lemma 2.5 of \cite{C-H}. Therefore the same proof shows that $H^i_*(\wedge^2 \, (E^{\sigma \,\vee})) = 0$ when $i = 2, \ldots n-1$. Applying Serre's duality completes the proof.
\end{proof}

We now prove our main result,

\begin{proof}[Proof of Theorem \ref{theorem_main_result}] Suffices to show one direction. Assume $H^i_*(X,\wedge^2 \,E) = 0$ for $i = 1,2,3,4$. Consider the composition of sequences \eqref{equation_sym_wedge} and \eqref{equation_wedge_sym}:
$$
0 \rar \wedge^2 \,E(-2d) \rar \wedge^2 \bar{F_1} \rar \bar{F_1} \otimes E^{\sigma} \rar E^{\sigma} \otimes \bar{F_0} \rar \wedge^2 \bar{F_0} \rar \wedge^2 E \rar 0
$$

One concludes that $H^i(X, \wedge^2 E(k)) = H^{i+4}(X, \wedge^2 E(k-2d))$ for $i = 1, \ldots n-5$. Thus $\wedge^2 \,E$ is ACM. By Lemma \ref{lemma_wedge_2_E_ACM_iff_wedge_2_G_ACM}, $\wedge^2 \,E^{\sigma}$ is also ACM. We consider sequence \eqref{equation_sym_wedge}
$$
0 \rar S^2 E(-d) \rar E(-d) \otimes \bar{F_1} \rar \wedge^2 \bar{F_1} \rar \wedge^2 E^{\sigma} \rar 0
$$

This gives $H^i_*(S^2 E) = 0$ when $i = 3, \ldots n-1$. Since $\wedge^2 E$ is ACM implies $\wedge^2 E^{\vee}$ is also ACM, we do a dual analysis to get $H^i_*(S^2 E^{\vee}) = 0$ when $i = 3, \ldots n-1$. Applying Serre's duality and combining this with the vanishing for $S^2 E$, we get that when $n - 3 \geq 2$ then $S^2 E$ is also ACM. 

Thus when $\text{dim} (X) \geq 5$, $E \otimes E = \wedge^2 E \oplus S^2 E$ is ACM which by Theorem \ref{theorem_corollary_weigand_huneke} implies that $E$ is split.
\end{proof}


\begin{remark}
We note that the statement $\wedge^2 \, E$ is ACM implies $E \otimes E$ is ACM is tight in the dimension. For a counterexample in lower dimension, consider any rank 2 indecomposable ACM vector bundle on a hypersurface of dimension 4. Then $\wedge^2 \, E$ is ACM but $E \otimes E \cong E \otimes E^{\vee}(t)$ can not be ACM for otherwise $H^2_*(X, \cE nd(E)) = 0$ and hence in particular, by lemma 2.2 of \cite{M-R-R}, $E$ is split which contradicts the indecomposability of $E$. 
\end{remark}

\section{$E \otimes E$ is ACM implies $E$ is split}

Let $f \in R = k[x_0, x_1, \ldots x_{n+1}]$ be a homogeneous irreducible polynomial of positive degree. Let $S = R/(f)$ and $X = Proj(S)$ be the corresponding hypersurface.

We state the following result without proof
\begin{lemma}
Let $E$ be a vector bundle on $X$. Let $M = H^0_*(X, E)$ be corresponding graded $S$-module. Then $E$ splits if $M$ is a free $S$-module.
\end{lemma}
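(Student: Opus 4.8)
The plan is to deduce the statement from the Serre correspondence, in the sharp form that reconstructs a coherent sheaf from its graded module of twisted global sections. Set $M = H^0_*(X,E) = \Gamma_*(E) := \oplus_{k \in \Z} H^0(X, E(k))$, viewed as a graded $S$-module. Since $S$ is generated over the field $S_0 = k$ by its degree-one part, the canonical evaluation morphism $\widetilde{M} \rar E$ of $\cO_X$-modules is an isomorphism; this is the standard fact that $\cF \isom \widetilde{\Gamma_*(\cF)}$ for any coherent sheaf $\cF$ on the $Proj$ of a graded ring generated in degree one (see e.g. Hartshorne, Chapter II.5, or EGA II). So the entire content of the lemma is to unwind what freeness of $M$ says after sheafification, and the one step carrying any real weight is precisely this reconstruction isomorphism; once it is in hand the rest is formal.

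Suppose then that $M$ is a free $S$-module. Because $S$ is a Noetherian $*$-local ring — non-negatively graded with unique maximal homogeneous ideal $\mathfrak{m} = \oplus_{k > 0} S_k$ and residue field $k$ — a graded module that is free over $S$ admits a homogeneous basis, by graded Nakayama (lift a homogeneous $k$-basis of $M/\mathfrak{m}M$ to homogeneous generators, which are then a basis). Hence $M \isom \oplus_{j \in J} S(-a_j)$ as graded $S$-modules, for some integers $a_j$, $j \in J$. Applying the sheafification functor $\widetilde{(-)}$, which is exact and commutes with arbitrary direct sums, and using $\widetilde{S(-a_j)} = \cO_X(-a_j)$ together with the isomorphism $\widetilde{M}\isom E$ of the previous paragraph, we obtain $E \isom \oplus_{j \in J} \cO_X(-a_j)$.

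It remains to observe that $J$ is finite: $E$ is a vector bundle, hence locally free of some finite rank $r$, while the right-hand side has rank $|J|$, so $|J| = r$ and $E$ is a direct sum of $r$ line bundles, i.e. split. There is no genuine obstacle in the argument; the two points that deserve a citation rather than a reproof are the reconstruction isomorphism $E \isom \widetilde{\Gamma_*(E)}$, which does use that $S$ is generated in degree one, and the graded-Nakayama statement that a graded module which is free over a $*$-local ring is graded-free with a homogeneous basis. Everything else is formal bookkeeping with the pair of functors $\Gamma_*$ and $\widetilde{(-)}$.
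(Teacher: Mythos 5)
Your argument is correct, and in fact the paper states this lemma without any proof, so there is nothing to compare it against: the route you take (the reconstruction isomorphism $E \cong \widetilde{\Gamma_*(E)}$ for $\mathrm{Proj}$ of a ring generated in degree one, plus graded Nakayama to turn freeness of $M$ into a graded-free decomposition $M \cong \oplus_j S(-a_j)$, then sheafify) is exactly the standard argument one would supply. The only point worth making explicit is that the Nakayama/lifting step and the passage from ``free'' to ``graded-free with homogeneous basis'' use that $M = H^0_*(X,E)$ is a finitely generated graded $S$-module with $M_k = 0$ for $k \ll 0$; this holds here because $E$ is locally free on the projective variety $X$ of positive dimension, and a one-line remark to that effect would make the proof airtight.
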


Following result is Theorem 3.1 in \cite{H-W}
\begin{theorem}[Huneke-Weigand] \label{theorem_of_huneke_weigand}
Let $(R,m)$ be an abstract hypersurface and let $M, N$ be $R$-modules, at least one of which has constant rank. If $M \otimes_R N$ is a maximal Cohen-Macaulay $R$-module then either $M$ or $N$ is free. 
\end{theorem}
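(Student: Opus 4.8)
The plan is to reduce the assertion to a vanishing statement for $\mathrm{Tor}$ and then feed it through the standard homological machinery available over hypersurfaces. First I would pass to the completion, which affects neither hypothesis nor conclusion: $M\otimes_R N$ is maximal Cohen--Macaulay iff $\widehat M\otimes_{\widehat R}\widehat N$ is, and $M$ (resp.\ $N$) is free over $R$ iff $\widehat M$ (resp.\ $\widehat N$) is free over $\widehat R$. So assume $R=S/(f)$ with $S$ a complete regular local ring and $f\in\mathfrak m_S$. We may assume $R$ is not regular: if it is, then $M\otimes_R N$ is torsion-free (being maximal Cohen--Macaulay over the Cohen--Macaulay ring $R$), Auslander's theorem gives $\mathrm{Tor}^R_i(M,N)=0$ for all $i\geq 1$, and the Auslander--Buchsbaum depth formula makes $M$ and $N$ both maximal Cohen--Macaulay, hence free. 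We may also assume $\dim R\geq 1$ (over an Artinian local ring a module of constant rank is already free) and that the module of constant rank, say $M$, has rank $r\geq 1$: if $r=0$ then $M$ is torsion, so $M\otimes_R N$ is torsion, and a torsion module that is maximal Cohen--Macaulay over a ring of positive dimension must be zero, whence $M=0$ or $N=0$.

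Two preliminary observations. Since $R$ is Cohen--Macaulay and $T:=M\otimes_R N$ is maximal Cohen--Macaulay, $T$ is unmixed with every associated prime minimal, hence $T$ is torsion-free. And since $M_{\mathfrak p}$ is free over $R_{\mathfrak p}$ at every minimal prime $\mathfrak p$, all the modules $\mathrm{Tor}^R_i(M,N)$ with $i\geq 1$ vanish at the minimal primes of $R$, i.e.\ are torsion.

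The key claim, and the technical heart of the argument, is that $\mathrm{Tor}^R_i(M,N)=0$ for all $i\geq 1$. By the rigidity of $\mathrm{Tor}$ over a hypersurface --- vanishing of a single $\mathrm{Tor}^R_q$ with $q\geq 1$ forces $\mathrm{Tor}^R_i=0$ for all $i\geq q$ --- it suffices to prove $\mathrm{Tor}^R_1(M,N)=0$. I would do this by induction on $d=\dim R$. For the inductive step, choose a sufficiently general $x\in\mathfrak m\setminus\mathfrak m^2$: its lift to $\mathfrak m_S$ is part of a regular system of parameters, so $R/xR$ is again an abstract hypersurface, now of dimension $d-1$; moreover $x$ is regular on $R$ and on $T$, one has $T/xT=M/xM\otimes_{R/xR}N/xN$ by right-exactness of the tensor product, $T/xT$ is maximal Cohen--Macaulay over $R/xR$, and $M/xM$ still has constant rank $r$. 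The induction hypothesis yields that $M/xM$ or $N/xN$ is free over $R/xR$, and freeness lifts along a regular element, so $M$ or $N$ is free over $R$; in particular $\mathrm{Tor}^R_1(M,N)=0$. The base case $d=1$ is handled directly: there every $\mathrm{Tor}^R_i(M,N)$ has finite length, and a short rigidity/length computation forces it to vanish. I expect the main obstacle to be the preliminary reduction that makes a generic $x$ regular on both $M$ and $N$ (a priori one of them may carry torsion): one must first peel off free summands and replace $M$ and $N$ by appropriate high syzygy modules, tracking how these operations interact with the hypothesis that $M\otimes_R N$ is maximal Cohen--Macaulay --- this bookkeeping is where Huneke and Wiegand's real work lies.

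Granting $\mathrm{Tor}^R_i(M,N)=0$ for all $i\geq 1$, the conclusion follows quickly. Suppose for contradiction that neither $M$ nor $N$ is free; after splitting off free summands, neither has a nonzero free summand. The Auslander depth formula --- legitimate here because all higher $\mathrm{Tor}$ vanish --- gives $\mathrm{depth}\,M+\mathrm{depth}\,N=\mathrm{depth}\,R+\mathrm{depth}\,T=2\dim R$, so $M$ and $N$ are both maximal Cohen--Macaulay. But a nonfree maximal Cohen--Macaulay module over a hypersurface has infinite projective dimension, hence complexity one, so its support variety in the one-dimensional space of cohomology operators is all of $\A^1$; by the Avramov--Buchweitz tensor-product formula for support varieties over a complete intersection, $\mathrm{Tor}^R_i(M,N)=0$ for $i\gg 0$ then forces the support variety of $N$ to be $\{0\}$, i.e.\ $\mathrm{pd}_R N<\infty$. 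Then $N$ is maximal Cohen--Macaulay of finite projective dimension over the Cohen--Macaulay ring $R$, so $\mathrm{pd}_R N=\mathrm{depth}\,R-\mathrm{depth}\,N=0$ and $N$ is free --- a contradiction. (Concretely: writing $M$ via a reduced matrix factorization, the vanishing of all $\mathrm{Tor}^R_i(M,N)$ says the resulting eventually $2$-periodic complex built from copies of $N$ is exact, which again forces $N$ free.) Hence $M$ or $N$ is free.
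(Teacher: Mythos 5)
The first thing to note is that the paper does not prove this statement at all: it is quoted as Theorem 3.1 of Huneke--Wiegand \cite{H-W} and used as a black box (the paper only proves its consequence, Theorem \ref{theorem_corollary_weigand_huneke}). So there is no in-paper proof to compare against; your proposal can only be measured against the original Huneke--Wiegand argument, whose overall shape it does reproduce: pass to the completion, reduce to the vanishing of all $\mathrm{Tor}^R_i(M,N)$ for $i\geq 1$, apply the depth formula to make both modules maximal Cohen--Macaulay, and then use the fact that over a hypersurface the vanishing of $\mathrm{Tor}^R_i(M,N)$ for $i\gg 0$ forces $M$ or $N$ to have finite projective dimension (this last step is in \cite{H-W} itself; your support-variety formulation is a correct, if anachronistic, substitute, and the matrix-factorization version is the classical one).

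As a proof, however, the proposal has a genuine hole exactly where you predict one: the ``key claim'' $\mathrm{Tor}^R_1(M,N)=0$ is never established. The induction does not close as stated. To lift freeness of $M/xM$ (resp.\ $N/xN$) along $x$ you need $x$ to be regular on $M$ (resp.\ $N$), and neither module is assumed torsion-free; replacing $M,N$ by syzygies or pushforwards to gain depth interacts badly with the hypothesis that $M\otimes_R N$ is maximal Cohen--Macaulay, and controlling that interaction is precisely the substance of Huneke--Wiegand's argument, not a routine preliminary. You also need $M/xM$ to retain constant rank over $R/xR$, which requires choosing $x$ outside the height-one components of the non-free locus of $M$ --- fixable by prime avoidance, but unaddressed. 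Most importantly, the base case $\dim R=1$ is dismissed as ``a short rigidity/length computation,'' whereas it is the crux: in dimension one the statement is essentially their theorem on torsion in tensor products and rests on the full strength of their rigidity theorem (single vanishing of $\mathrm{Tor}$ propagating upward), itself a deep result of the same paper that you are quoting, not proving. A smaller structural point: your inductive step already concludes the full theorem in lower dimension, so the depth-formula/periodicity paragraph only does work in the base case; that is logically fine if the induction is on the theorem itself, but as written the ``key claim'' is not an independently established stepping stone. In sum, the skeleton matches Huneke--Wiegand, but the load-bearing steps are the ones left as acknowledged gaps, so this is an outline of their proof rather than a proof.
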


The corresponding version for vector bundles is of course not true as every vector bundle on a planar curve is ACM (vacuously) and there exists indecomposable vector bundles on various planar curves. Though for our need, the following corollary suffices.
\begin{theorem}[Corollary to Theorem \ref{theorem_of_huneke_weigand}] \label{theorem_corollary_weigand_huneke} Let $X = \text{Proj }(S)$ be a hypersurface of dimension $\geq 3$. Let $E$ be an ACM vector bundle on $X$. Further assume that $E \otimes E$ is ACM. Then $E$ splits. 
\end{theorem}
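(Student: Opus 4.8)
The plan is to reduce the geometric statement to the algebraic statement of Theorem \ref{theorem_of_huneke_weigand} by passing to the section module. Let $S = k[x_0,\dots,x_{n+1}]/(f)$ be the homogeneous coordinate ring of $X$, let $R_\mathfrak{m}$ denote the local ring of the affine cone at the vertex (a local hypersurface of dimension $n+1 \geq 4$), and let $M = H^0_*(X,E)$, viewed as a finitely generated graded $S$-module, with $\widehat{M}$ its localization/completion at the irrelevant ideal. The first step is to check that $M$ is a maximal Cohen--Macaulay $S_\mathfrak{m}$-module: since $E$ is ACM and $\dim X = n \geq 3$, the local cohomology modules $H^i_\mathfrak{m}(M)$ vanish for $i=1,\dots,n$ (the $i=0,1$ vanishing from $E$ being a bundle on a scheme of positive dimension together with $H^0_*$ being taken, and the $i=2,\dots,n$ vanishing translating the ACM condition $H^j_*(X,E)=0$, $1\le j\le n-1$), so $\operatorname{depth} M = n+1 = \dim S_\mathfrak{m}$. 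Moreover $M$ has constant rank, namely $\operatorname{rank} E$, since $E$ is locally free.

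The second step is to identify $H^0_*(X, E\otimes E)$ with $M \otimes_S M$ up to the usual subtlety. There is a natural map $M\otimes_S M \to H^0_*(X, E\otimes E)$; because $E$ is a bundle and $X$ has dimension $\geq 3$, one checks (e.g. by the same local cohomology bookkeeping, or by noting that $M$ is a second syzygy hence reflexive and the tensor product differs from its reflexive hull only in codimension $\geq 2$) that this map becomes an isomorphism after saturation, and in any case $\operatorname{depth}(M\otimes_S M) \geq \operatorname{depth} H^0_*(X,E\otimes E)$ suffices for the argument. Since $E\otimes E$ is assumed ACM, the same computation as in the first step shows $H^0_*(X, E\otimes E)$ is a maximal Cohen--Macaulay $S_\mathfrak{m}$-module, and hence so is $M\otimes_{S_\mathfrak{m}} M$ (a module squeezed between a MCM module and its reflexive hull, agreeing in codimension $\leq 1$ with depth $\geq 2$ on the punctured spectrum, is itself MCM when the ambient depth is $\geq 2$).

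The third step applies Theorem \ref{theorem_of_huneke_weigand} with $(R_\mathfrak{m},\mathfrak{m})$ the abstract hypersurface, and $M = N = M_\mathfrak{m}$, which has constant rank. We conclude $M_\mathfrak{m}$ is a free $R_\mathfrak{m}$-module. Since $M$ is a finitely generated graded module over the graded ring $S$ whose localization at the irrelevant maximal ideal is free, $M$ is graded-free, i.e. $M \cong \bigoplus_j S(a_j)$. By the Lemma preceding Theorem \ref{theorem_of_huneke_weigand}, $E$ splits.

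The main obstacle is the bookkeeping in step two: one must be careful that ``$E\otimes E$ is ACM'' genuinely forces $M\otimes_S M$ (not merely some sheafification) to be maximal Cohen--Macaulay, since the tensor product of modules need not be saturated. The clean way around this is to work throughout with the sheaf tensor product and its module of global sections, using that over a scheme of dimension $\geq 3$ the functor $H^0_*$ sends ACM bundles to MCM modules and is compatible with tensor up to a module supported in codimension $\geq 2$, whose presence does not affect depth once depth is already $\geq 2$; the reflexivity of $M$ (it is a second syzygy, cf. the remarks around sequence \eqref{eqn_E_1_step}) is what guarantees the discrepancy sits in high codimension. Everything else is a direct translation between the cohomological ACM condition and the depth condition via local cohomology and the sheaf--module correspondence on the punctured cone.
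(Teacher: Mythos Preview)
Your overall strategy matches the paper's: pass to the section module $M = H^0_*(X,E)$, show $M \otimes_S M$ is maximal Cohen--Macaulay, and invoke Huneke--Wiegand. The first and third steps are fine. The gap is entirely in your second step, and you have essentially identified it yourself without closing it.

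The assertion that ``$\operatorname{depth}(M\otimes_S M) \geq \operatorname{depth} H^0_*(X,E\otimes E)$'' is not justified and is in general false. Knowing that the natural map $M\otimes_S M \to H^0_*(E\otimes E)$ has kernel and cokernel supported only at the irrelevant ideal tells you nothing useful about $\operatorname{depth}_{\mathfrak m}(M\otimes_S M)$: from $0 \to K \to M\otimes M \to H^0_*(E\otimes E) \to C \to 0$ with $K,C$ of finite length, the depth lemma gives at best $\operatorname{depth}(M\otimes M) \geq 1$ when $K=0$, and $\operatorname{depth}(M\otimes M)=0$ when $K\neq 0$. Your ``squeezed between an MCM module and its reflexive hull'' heuristic does not apply, since there is no MCM module sitting \emph{below} $M\otimes M$ in the picture. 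Reflexivity of $M$ itself is irrelevant here; what you would need is $H^0_{\mathfrak m}(M\otimes M)=H^1_{\mathfrak m}(M\otimes M)=0$, which is exactly the content of the missing argument.

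The paper fills this gap by an explicit diagram chase exploiting the $2$-periodic matrix factorization resolution. One first tensors the sequences $0\to E^{\sigma}\to \bar F_0\to E\to 0$ and $0\to E(-d)\to \bar F_1\to E^{\sigma}\to 0$ with $E$ (resp.\ $E^{\sigma}$) to see that $E\otimes E^{\sigma}$ is also ACM; then, using $\dim X\geq 3$ so that $H^0_*$ of these tensored sequences are short exact, one compares with the right-exact sequences obtained by tensoring $H^0_*$-modules. The middle vertical maps are equalities because $\bar F_i$ is split, and two applications of the snake lemma show that $\phi_1: H^0_*(E)\otimes H^0_*(E) \to H^0_*(E\otimes E)$ is first surjective and then injective. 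This is the step your proposal is missing; once $\phi_1$ is an isomorphism, Huneke--Wiegand applies exactly as you say.
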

\begin{proof} We consider a minimal resolution of $E$ on $X$
\begin{align} \label{sequence_E_1_step_X}
0 \rar E^{\sigma} \rar \bar{F_0} \rar E \rar 0
\end{align}
and 
\begin{align} \label{sequence_G_1_step_X}
0 \rar  E(-d) \rar \bar{F_1} \rar E^{\sigma} \rar 0
\end{align}

Where $\bar{F_0}, \bar{F_1}$ are direct sum of line bundles. Tensoring sequence \eqref{sequence_E_1_step_X} with $E$ and sequence \eqref{sequence_G_1_step_X} with $E^{\sigma}$ and using the fact that $E \otimes E$ is ACM, we deduce that $E \otimes E^{\sigma}$ is ACM. Thus there exists a short exact sequence of graded $S$-modules:
\begin{align*} \label{sequence_H_0_graded_E_tensor_G}
0 \rar  H^0_*(E^{\sigma} \otimes E) \rar H^0_*(\bar{F_0} \otimes E) \rar H^0_*(E \otimes E) \rar 0 
\end{align*}

Here we are using the fact that $\text{dim}(X) \geq 3$. Sequence \eqref{sequence_E_1_step_X} yields the following right exact sequence
\begin{align*}
H^0_*(E^{\sigma}) \otimes H^0_*(E) \rar H^0_*(\bar{F_0}) \otimes H^0_*(E) \rar H^0_*(E) \otimes H^0_*(E) \rar 0
\end{align*}

Thus we get the following commutative diagram

\vspace{2mm}

\xymatrix{&H^0_*(E^{\sigma}) \otimes H^0_*(E) \ar[r] \ar[d]^{\phi_2} &H^0_*(\bar{F_0}) \otimes H^0_*(E) \ar[r] \ar@{=}[d] &H^0_*(E) \otimes H^0_*(E) \ar[r] \ar[d]^{\phi_1} &0 \\ 0 \ar[r]  &H^0_*(E^{\sigma} \otimes E) \ar[r] &H^0_*(\bar{F_0} \otimes E) \ar[r] &H^0_*(E \otimes E) \ar[r] &0}

where the all vertical maps are naturally defined. Middle map is an equality because $\bar{F_0}$ is a split bundle. By Snake's lemma, $\phi_1$ is a surjective map.

Similarly we get following commutative diagram from the sequence \eqref{sequence_G_1_step_X}
 
 \xymatrix{ &H^0_*(E(-d)) \otimes H^0_*(E) \ar[r] \ar[d] &H^0_*(\bar{F_1}) \otimes H^0_*(E) \ar[r] \ar@{=}[d] &H^0_*(E^{\sigma}) \otimes H^0_*(E) \ar[r] \ar[d]^{\phi_2} &0 \\ 0 \ar[r]  &H^0_*(E(-d) \otimes E) \ar[r] &H^0_*(\bar{F_1} \otimes E) \ar[r] &H^0_*(E^{\sigma} \otimes E) \ar[r] &0}

By Snake's lemma $\phi_2$ is surjective. In turn this implies that $\phi_1$ is injective and hence $H^0_*(E) \otimes H^0_*(E) \rar H^0_*(E \otimes E)$ is an isomorphism. Thus $H^0_*(E) \otimes H^0_*(E)$ is a maximal Cohen-Macaulay module and we can apply Theorem \ref{theorem_of_huneke_weigand} to conclude that $H^0_*(E)$ is free and therefore $E$ splits.  
\end{proof}

\begin{proof}[Proof of Theorem \ref{theorem_rank_3_splitting}] The perfect pairing $E \times \wedge^2 \, E \mapsto \wedge^3 \, E = \cO_X(e)$ induces an isomorphism $\wedge^2 \, E \cong E^{\vee}(e)$. By Serre's duality then $\wedge^2 \, E$ is ACM and hence we can apply Theorem \ref{theorem_main_result}.
\end{proof}

%

\section{Acknowledgement}

We thank Suresh Nayak for pointing out a crucial mistake in an earlier version. We thank Jishnu Biswas and Girivaru Ravindra for constant support and useful conversations.

\begin{thebibliography}{}

\bibitem[Beauville2000]{Beau} A. Beauville, {\it Determinantal hypersurfaces}, Dedicated to William Fulton on the occasion of his 60th birthday.  Michigan Math. J.  48, 39--64, 2000.
\bibitem[BR2010]{Biswas-Ravindra2010} J. Biswas and G.V. Ravindra, \textit{Arithmetically Cohen-Macaulay bundles on complete intersection varieties of sufficiently high multi-degree}, 
Mathematische Zeitschrift  265 (2010), No. 3, 493--509. 
\bibitem[BE1975]{Buchsbaum-Eisen1975} David A. Buchsbaum and David Eisenbud, \textit{Generic free resolutions and a family of generically perfect ideals}, Adv. Math. 18, (1975) 245-301.
\bibitem[BGS1987]{BGS} R.-O. Buchweitz, G.-M. Greuel, and F.-O. Schreyer, \textit{Cohen-Macaulay modules on hypersurface singularities II}, Inv. Math. 88 (1987), 165-182.
\bibitem[CF2009]{Chiantini-Faenzi2009} L. Chiantini and D. Faenzi, \textit{Rank 2 arithmetically Cohen-Macaulay bundles
on a general quintic surface}, Math. Nachr.,
vol. 282, 12 (2009) 1691-1708
\bibitem[CH2011]{C-H} M. Casanellas and R. Hartshorne, \textit{ACM bundles on cubic surfaces}, J. Eur. Math. Soc. 13 (2011), 709-731. 
\bibitem[CM2002]{C-M1} L. Chiantini and C. Madonna, \textit{ACM bundles on a general quintic threefold}, Matematiche (Catania) 55(2000), no. 2 (2002), 239-258.
\bibitem[CM2004]{C-M2} L. Chiantini and C. Madonna, \textit{A splitting criterion for rank 2 bundles on a general sextic threefold}, Internat. J. Math. 15 (2004), no. 4, 341-359.
\bibitem[CM2005]{C-M3} L. Chiantini and C. Madonna, \textit{ACM bundles on a general hypersurfaces in $\p^5$ of low degree}, Collect. Math. 56 (2005), no. 1, 85-96.
\bibitem[Eisenbud1981]{Eisen81} D. Eisenbud, \textit{Homological algebra on a complete intersection}, Trans. of Amer. Math. Soc. Vol. 260, No. 1 (1980), 35-64.
\bibitem[Faenzi2008]{Faenzi2008} D. Faenzi, \textit{Rank 2 arithmetically Cohen-Macaulay bundles on a nonsingular cubic surface}, J. Algebra 319 (2008), 143–186.
\bibitem[Horrocks1964]{Hor} G. Horrocks, \textit{Vector bundles on the punctured spectrum of a local ring}, Proc. London Math. Soc. 14 (1964), 689-713. 
\bibitem[HW1994]{H-W} C. Huneke, R. Wiegand, \textit{Tensor product of modules and the rigidity of tor}, Math. Ann., 299, 449-476 (1994).
\bibitem[Kleppe1978]{Kle} H. Kleppe, \textit{Deformation of schemes defined by vanishing of pfaffians}, Jour. of algebra 53 (1978), 84-92.
\bibitem[Madonna1998]{Madonna1998} \textit{A splitting criterion for rank 2 vector bundles on hypersurfaces in $\p^4$}
, Rend. Sem. Mat. Univ. Politec. Torino 56 (1998), no. 1, 43-54.
\bibitem[Madonna2000]{Madonna2000} C. Madonna, \textit{Rank-two vector bundles on general quartic hypersurfaces in $\p^4$}, Rev. Mat. Complut. 13 (2000), no. 2, 287-301.
\bibitem[KRR2007]{M-R-R} N. Mohan Kumar, A.P. Rao and G.V. Ravindra, \textit{Arithmetically Cohen-Macaulay bundles on hypersurfaces}, Commentarii Mathematici Helvetici, 82 (2007), No. 4, 829--843.
\bibitem[KRR2007(2)]{M-R-R2} N. Mohan Kumar, A.P. Rao and G.V. Ravindra, \textit{Arithmetically Cohen-Macaulay bundles on three dimensional hypersurfaces}, Int. Math. Res. Not. IMRN (2007), No. 8, Art. ID rnm025, 11pp.
\bibitem[Ravindra2009]{R} G.V Ravindra, \textit{Curves on threefolds and a conjecture of Griffiths-Harris}, Math. Ann. 345 (2009), 731-748.
\bibitem[Tripathi2015]{Tripathi2015} A. Tripathi, \textit{Low rank arithmetically Cohen-Macaulay bundles on hypersurfaces of high dimension} to appear in Comm. Algebra.
\end {thebibliography}

\end{document}